\newtheorem{theorem}{Theorem}[section]
\newtheorem{lemma}{Lemma}[section]
\newtheorem{proposition}{Proposition}[section]
\newtheorem{corollary}{Corollary}[section]
\newtheorem{remark}{Remark}[section]
\renewcommand{\thefootnote}{\fnsymbol{footnote}}
\begin{document}

\baselineskip=18pt

\date{}
\title{\textbf{Extrapolation in Weighted Classical and Grand Lorentz Spaces. Application to the Boundedness of Integral operators}}

\author{Vakhtang ~Kokilashvili and Alexander~Meskhi}

\maketitle
\begin{abstract}
\noindent We establish  weighted extrapolation  theorems in classical and grand Lorentz spaces. As a consequence we have the
weighted boundedness of operators of Harmonic Analysis in grand Lorentz spaces. We treat  both cases: diagonal and off-diagonal ones.
\end{abstract}

\renewcommand{\thefootnote}{\fnsymbol{footnote}}
\footnotetext{2010\emph{Mathematics Subject Classification}:
46E30;  42B20; 42B25.} \footnotetext{\emph{Key words and phrases}: Weighted extrapolation;   Lorentz spaces; grand Lorentz spaces;  weighted inequalities;  maximal functions;  Calder\'on--Zygmund operators;   fractional integrals.}
 \footnotetext{\emph{Running head}: Extrapolation in  Lorentz Spaces}



\vspace{10 mm}

\section{Introduction}
Our aim is to introduce new weighted grand Lorentz spaces and to derive Rubio de Franc\'ia's weighted extrapolation results in these spaces.
The obtained  results are applied to get the boundedness of operators of Harmonic Analysis in weighted grand Lorentz spaces.
To derive the boundedness of operators we rely on  a weighted extrapolation theorem in the classical Lorentz spaces which has an independent interest.
To get the latter result we first prove   weighted extrapolation statements for Banach function spaces.  Rubio de Franc\'ia's  extrapolation  theory  gives  powerful tools  in the study mapping properties  of integral operators in weighted function spaces. One of the  important properties of the $A_p$  weights is the extrapolation theorem announced  by  Rubio de Franc\'ia \cite{Rubio},
and given with a detailed proof in \cite{Rubio1}. The first version of the extrapolation
theorem says that if for some $p_0$,  a sublinear operator is bounded in $L^{p_0}_w$ for
all $w\in   A_{p_0/\lambda}$ with $1\leq \lambda < \infty$ and $\lambda\leq p\leq \infty$,  then it is bounded in  $L^p_w$  for
all $w \in  A_{p/\lambda}$ and $\lambda< p< \infty$. There exists long list of papers which deals with different proofs of this theorem,
generally speaking,  in various function spaces and related topics (see e.g., \cite{Doan1}, \cite{CMP} and references cited therein).

\section{Preliminaries}\label{Prelim}

Let $(X, d, \mu)$ be a quasi-metric measure space with a quasi-metric $d$ and measure $\mu$. A~quasi-metric $d$
is a function $d\colon X \times X \rightarrow [0,\infty)$ which satisfies the following conditions:

\begin{itemize}
\item[{\rm(i)}] $d(x,y)=0$ if and only if  $x=y$;
\item[{\rm(ii)}] for all  $x,y\in X$, $d(x,y) =  d(y,x)$;
\item[{\rm(iii)}] there is a positive constant $\kappa$ such that $d(x,y)\leq \kappa\,(d(x,z)+d(z,y))$ for all $x,y,z\in X$.
\end{itemize}

In what follows we will assume that the balls $B(x, r) := \{y\in X; \, d(x, y)<r\}$ are measurable with positive $\mu$ measure for all $x\in X$ and $r>0$.

If $\mu$ satisfies the doubling condition, i.e., there is a positive constant $D_{\mu}$ such that for all $x\in X$ and $r>0$,

\begin{equation}\label{doubling}
\mu (B(x,2r)) \leq D_{\mu} \mu (B(x,r)),
\end{equation}
then we say that $(X, d, \mu)$ is a space of homogeneous type ($SHT$). Throughout the paper we will assume that $(X, d, \mu)$ is an $SHT$.


For the definition, examples and some properties of an $SHT$ see, e.g.,  the paper \cite{MS} and the  monographs \cite{StTo}, \cite{CoWe}.

Throughout the paper, when we deal with  an $SHT$, we will assume the class of continuous functions is dense in $L^1(X)$.

For a given quasi-metric measure  space $(X, d, \mu)$ and $q$ satisfying $1\leq q\leq \infty$, we will denote by $L^q=L^q(X,\mu)$ the Lebesgue
space equipped with the standard norm.


\noindent

Let $f$ be a $\mu-$ measurable function on $X$ and let $1\leq p < \infty$, $1\leq s \leq \infty$.
Suppose that $w$ is a weight function on $X$, i.e. $w$ is $\mu-$ a.e.
positive and locally integrable on $X$.
We say that $f$ belongs to the weighted Lorentz space $L^{p,s}_w(X)$ ($L^{p,s}_w$ shortly) if

$$
 \|f\|_{L^{p,s}_w}= \begin{cases}
 \bigg( s \int\limits_0^{\infty} \big(  w \{ x\in X:  |f(x)|> \tau \}\big)^{s/p} \tau^{s-1} d\tau  \bigg)^{1/s},
\;\; \text{if}\; 1\leq s< \infty,  \\
\sup_{s>0} s \Big( w( \{ x\in X:  |f(x)|> s \}  \Big)^{1/p} , \;\; \text{if} \; s= \infty
\end{cases}
$$
is finite, where
$$ w E := \int\limits_E w(x) d\mu(x). $$

It is easy to see that  $L^{p,p}_w(X)$ coincides with the weighted Lebesgue space $L^p_w$.

Denote by $f^{*}_w$ a weighted non-increasing rearrangement of $f$ with respect to the measure $d\nu= w d\mu$.
Then by integration by parts it can be checked that (see also \cite{Hu}):

$$
\|f\|_{L^{p,s}_w}=
\begin{cases}
\bigg(  \frac{s}{p} \int\limits_{0}^{\infty} \bigg(  t^{1/p} f^{*}_w (t)  \bigg)^{s} \frac{dt}{t} \bigg)^{1/s}, \;\; \text{if} \; 1\leq s< \infty,  \\
\sup_{t>0} \{ t^{1/p} f^*_w(t)\},\;\;   \text{if} \;  s= \infty,   \}
\end{cases}
$$

Now we list some useful properties of Lorentz spaces (see e.g., , \cite{Hu}, \cite{CR} (Ch. 6), \cite{KK}):

(i) If $1\leq  p< \infty$ and $1\leq s \leq   \infty$, then $L^{p,s}_w(X)$ is a Banach space with the norm

$$ \|f\|_{(p,s,w)}= \begin{cases} \bigg(\int\limits_0^{\infty}[t^{1/p}f^{**}_w(t)]^s\frac{dt}{t}\bigg)^{1/s}, \;\; 1\leq s ,\infty, \\
\sup_{t>0} tf^{**}_w(t), \;\;\;\; s= \infty  \end{cases}$$
which is equivalent to $\| \cdot \|_{L^{p,s}_w}$, where
$$  f^{**}_w(t)=\int\limits_0^t f^*_w(\tau)d\tau; $$

(ii) $\| \chi_{E} \|_{L^{p,s}_w} = (w E)^{1/p}$;

(iii) If $1\leq p <\infty$, $s_2 \leq s_1$, then $L^{p,s_2}_w  \hookrightarrow L^{p,s_1}_w$ with the embedding constant $C_{p,s_1, s_2}$ depending only on $p$, $s_1$ and $s_2$;

(iv) There is a positive constant $C_{p,s}$ such that
$$ C_{p,s}^{-1} \|f \|_{L^{p, s}_w} \leq \sup_{\|h\|_{L^{p',s'}_w}\leq 1} \bigg|  \int\limits_{X} f(x) h(x) w(x) d\mu(x)
\bigg| \leq C_{p,s} \|f \|_{L^{p, s}_w}$$
for every $f\in L^{p,s}_w$, where $p'= p/(p-1)$, $s'= s/(s-1)$.

(v) (H\"older's inequality) Let $\frac{1}{p}= \frac{1}{p_1} + \frac{1}{p_2}$, $\frac{1}{s}= \frac{1}{s_1} + \frac{1}{s_2} $. Then

$$ \|f_1  f_2\|_{L^{p,s}_w} \leq  C \|f_1  \|_{L^{p_1, s_1}_w} \|f_2  \|_{L^{p_2, s_2}_w} $$
for all $f\in  L^{p_1, s_1}_w$ and $ f_2 \in L^{p_2, s_2}_w$, where $C= C_{p,s, p_1, p_2, s_1, s_2}$;

(vi)  $$ \|f^{1/q_0} \|^{q_0}_{L^{p,s}_w} = \|f\|_{L^{p/q_0, s/q_0}_w} $$
for $f\in L^{p/q_0, s/q_0}_w$ with $q_0$ satisfying the condition $p/q_0>1$, $s/q_0> 1$.

\vskip+0.1cm
Taking property (iv) into account we have the following statement:

\begin{proposition}\label{dual-miu}    There is a  positive constant $C_{p,s}$ such that
\begin{equation}\label{dual}
C_{p,s}^{-1} \|f \|_{L^{p, s}_w} \leq \sup_{\|w^{-1}h\|_{L^{p',s'}_w}\leq 1} \bigg| \int\limits_{X} f(x) h(x) d\mu(x) \bigg| \leq C_{p,s} \|f \|_{L^{p, s}_w},
 \end{equation}
with the same constant $C_{p,s}$ as in $(iv)$. Hence, the K\"othe dual  space  $\Big( L^{p,s}_w\Big)'$  of $L^{p,s}_w$ with respect to the  measure   space $(X,  \mu)$ (not with respect to the measure space $(X, \nu)$, where $d\nu= w d\mu$) is given by the norm equivalent to the quasi-norm $  \| w^{-1} f\|_{L^{p',s'}_w(X)}.  $
\end{proposition}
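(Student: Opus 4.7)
The plan is to derive \eqref{dual} from property (iv) by the elementary substitution $h \mapsto w^{-1}g$. Under this change of variable, the weight in the integrand of (iv) gets absorbed and the constraint on $h$ is simply rewritten in terms of $g$, which is exactly the form appearing in the statement to be proved. So no new analytical ingredient is required; the content is really just a rephrasing of (iv) with respect to the underlying measure $\mu$ instead of $d\nu = w\,d\mu$.

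Concretely, first I would note that the map $h \mapsto g := wh$ is a bijection on the space of $\mu$-measurable (equivalently, $\nu$-measurable, since $w>0$ $\mu$-a.e.) functions on $X$. Substituting $h = w^{-1}g$ in (iv) one has
$$\int_X f(x)\,h(x)\,w(x)\,d\mu(x) \;=\; \int_X f(x)\,g(x)\,d\mu(x),$$
while the constraint $\|h\|_{L^{p',s'}_w}\le 1$ becomes $\|w^{-1}g\|_{L^{p',s'}_w}\le 1$. Taking supremum over $h$ on the left-hand side of (iv) is the same as taking supremum over $g$ subject to this rewritten constraint, so \eqref{dual} follows with exactly the same constant $C_{p,s}$.

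The second assertion of the proposition then reads off from the definition of the K\"othe dual with respect to $(X,\mu)$: namely, $(L^{p,s}_w)'$ consists of those $\mu$-measurable $g$ for which
$$\|g\|_{(L^{p,s}_w)'} \;=\; \sup_{\|f\|_{L^{p,s}_w}\le 1}\;\Bigl|\int_X f(x)\,g(x)\,d\mu(x)\Bigr| \;<\;\infty,$$
and by \eqref{dual} applied with the roles of $f$ and the test function interchanged (which is legitimate by the self-dual form of property (iv)), this quantity is equivalent to $\|w^{-1}g\|_{L^{p',s'}_w}$.

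There is no real obstacle here; the only point worth double-checking is that the bijection $h\leftrightarrow w^{-1}h$ genuinely preserves the admissible test-function class in the variational characterization, which is immediate from $w$ being $\mu$-a.e.\ positive and locally integrable. The slight subtlety in writing it out cleanly is keeping straight which measure ($\mu$ versus $\nu$) governs each norm and integral, but this is bookkeeping rather than a substantive difficulty.
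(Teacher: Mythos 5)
Your proposal is correct and is precisely the (implicit) argument the paper intends: the paper introduces the proposition with ``Taking property (iv) into account,'' and the content is exactly the substitution $h \mapsto w^{-1}g$ (equivalently $g = wh$), which turns the weighted pairing $\int_X f h w\,d\mu$ into $\int_X f g\,d\mu$ while rewriting the admissibility constraint $\|h\|_{L^{p',s'}_w}\le 1$ as $\|w^{-1}g\|_{L^{p',s'}_w}\le 1$, preserving the constant $C_{p,s}$. No further comment is needed.
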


\begin{remark}\label{mainremark} In the sequel constants of the type $C_{p,s,\cdots,}$ depending on parameters  $p,s, \cdots$ (for example,
on parameters  of Lorentz spaces) and having the property

\begin{equation}\label{const}
\sup_{0< \varepsilon, \eta, \cdots,  < \sigma_0} C_{p-\varepsilon, s- \eta, \cdots}  \equiv C < \infty;
\end{equation}
where $\sigma_0$ is a small positive constant, will be denoted by $C$.

For example, the constants from (iii), (iv), \eqref{dual} and (v) have such a property (see e.g., \cite{Hu}).

Condition \eqref{const} is satisfied, for example, if the mappings $(p,s, \cdots) \mapsto C_{p,s, \cdots}$  are continuous with respect to
$p,s, \cdots, $.
\end{remark}

Let  $X$ be bounded (i.e., it is contained in some ball) and let $w$ be a weight on $X$.   In this case, $w$ is integrable on  $X$.  By the definition the weighted Iwaniec-Sbordone space $L^{p), \theta}_w(X)$  is defined with respect to the norm:

$$ \| f\|_{L^{p), \theta}_w(X)}= \sup_{0< \varepsilon< p-1} \varepsilon^{\frac{\theta}{p-\varepsilon}} \|f\|_{L^{p- \varepsilon}_w(X)}, \;\;
1<p<\infty, \;\; \theta>0. $$

The space $L^{p), \theta}_w(X)$  for $w\equiv 1$ and $X=\Omega$, where $\Omega$ is a bounded domain in ${\mathbb{R}}$,  was introduced in \cite{IwSb} for $\theta=1$ and in \cite{GIS} for $\theta>0$.

For structural properties of $L^{p), \theta}_w(X)$ spaces and mapping properties of operators of Harmonic Analysis in these spaces we refer,
e.g., to the monograph \cite{KMRS2} and references cited therein.

In the paper \cite{MeJMS} (see also \cite{KMRS2}, Ch. 14) it was introduced the grand Lorentz space on the  interval $(0,1)$ as follows: we say that $f\in \Lambda^{p), \theta}_w$,
$0< p< \infty$,  if
$$ \| f\|_{ \Lambda^{p), \theta}_w}=  \sup_{0 < \varepsilon < \varepsilon_0} \bigg(\varepsilon^{\theta} \int\limits_0^1 |f^{*}(t)|^{p-\varepsilon}
w(t) dt \bigg)^{\frac{1}{p-\varepsilon}} < \infty, $$
where $f^{*}$ is the decreasing rearrangement of $f$ with respect to the Lebesgue measure on $(0,1)$.
In the same paper the boundedness of the Hardy--Littlewood maximal operator was established in $\Lambda^{p), \theta}_w$ (see also \cite{JK}, \cite{FiKaZAA} for related topics).
Here $g^{*}$ is decreasing rearrangement of $g$ with respect to Lebesgue measure and $\varepsilon_0$ is defined as follows:

$$ \varepsilon_0=
\begin{cases} p-1, \;\; \text{if} \; p>1, \\
p, \; \text{if}\; p \leq 1.
\end{cases}
$$

Now we introduce grand Lorentz space in a different way. In particular, for a measurable function $f$ and a weight function $w$ on $X$, we let

$$  \|f\|_{  L^{p), s, \theta}_w }= \sup_{0< \varepsilon < p-1} \varepsilon^{\frac{\theta}{p-\varepsilon}}
 \| f \|_{  L^{p-\varepsilon, s}_w  }, $$
where $1< p< \infty$, $1\leq s \leq \infty$.


Let $1<p< \infty$. A weight function $w$ defined on $X$  belongs to the Muckenhoupt class $A_p(X)$ if
\begin{eqnarray*}
[w]_{A_p(X)} :=  \sup_{B} \bigg( \frac{1}{\mu(B)} \int\limits_{B}  w(x) \, d\mu(x) \bigg)
\bigg( \frac{1}{\mu(B)} \int\limits_{B} w^{1-p'}(x) d\mu(x)\bigg)^{p-1} <\infty,
\end{eqnarray*}
where the supremum is taken over all balls $B \subset X$.

Further, we say that $w\in A_1(X)$ if

\begin{equation}\label{a1}
(M w) (x)  \leq C w(x), \;\;\; for \;\; \mu- a.e. \;  x,
\end{equation}
where $M$ is the Hardy--Littlewood maximal operator defined on  $X$, i.e.,

$$ M g(x)= \sup_{B \ni x} \frac{1}{ \mu (B)}  \int\limits_{B} |g(y)| \; d\mu(y). $$

We denote by $[w]_{A_1}$ the best possible constant in (\ref{a1}).

The class of weights $A_{\infty}$ is the union of classes $A_p$, $1\leq p< \infty$. Further (see \cite{Hru} and  \cite{HuPeRe}),
$$ [w]_{A_{\infty}} :=  \sup_B \bigg( \frac{1}{\mu(B)} w d\mu \bigg) \exp \bigg( \frac{1}{\mu(B)} \int\limits_B \log w^{-1}d\mu\bigg). $$

There exists also another $A_{\infty}$ characteristic  due to \cite{F}:
$$ [w]_{A_{\infty}}^W:=  \sup_{B} \frac{1}{w(B)} \int\limits_B M(w \chi_B)d\mu.  $$
It can be checked (see also \cite{HuPeRe}) that
$$[w]^{W}_{A_{\infty}} \leq C_{\kappa, \mu} [w]_{A_{\infty}} \leq \overline{C}_{\kappa, \mu} [w]_{A_{p}}$$
with some structural constants $C_{\kappa, \mu}$ and $\overline{C}_{\kappa, \mu}$.

Let $1<p,q<\infty$. Suppose that $\rho$ is $\mu$-a.e. positive function such that $\rho^q$ is locally integrable. We say that
$\rho\in {\mathcal{A}}_{p,q}(X)$ if

\begin{eqnarray*}
[\rho]_{{\mathcal{A}}_{p,q}}:= \sup_{B} \bigg( \frac{1}{\mu B} \int\limits_{B} \rho^q \; d\mu \bigg)
\bigg( \frac{1}{\mu B} \int\limits_B \rho^{-p'}\, d\mu \bigg)^{q/p'}< \infty,
\end{eqnarray*}
where the supremum is taken over all  balls  $B \in  X$.

If $p=q$, then we denote ${\mathcal{A}}_{p,q}$ by ${\mathcal{A}}_{p}$. The next relation can be checked immediately
\begin{eqnarray*}
[\rho]_{{\mathcal{A}}_{p,q}}= [ \rho^q ]_{A_{1+ q/p'}}, \;\; 1<p\leq q< \infty.
\end{eqnarray*}
In particular, this equality  for $p=q$ has the form
\begin{eqnarray*}\label{equality1}
[\rho]_{{\mathcal{A}}_{p}}= [ \rho^p ]_{A_{p}}, \;\; 1<p< \infty.
\end{eqnarray*}

Since the Lebesgue differentiation theorem holds in $(X,d,\mu)$, it can be checked that
$$ [w]_{A_p}\geq 1; \;\;\; [\rho]_{{\mathcal{A}}_{p,q}} \geq 1. $$

\vskip+0.2cm

Due to H\"older's inequality the following monotonicity property of $A_p$ classes  holds:
\begin{equation}\label{monotone}
 [w]_{A_q} \leq [w]_{A_p}, \;\;\;\; 1\leq p\leq q< \infty.
 \end{equation}

It can be also verified that
\begin{equation}\label{1-p'}
\Big[  w\Big]_{A_p(X)}= \Big[  w^{1-p'}\Big]^{p-1}_{A_{p'}(X)}.
\end{equation}

Further, let $1< p< \infty$ and $1 \leq  s\leq \infty$. We say that a weight function $w$ belongs to the class $A(p,s)$ if there is a positive constant $C$ such that
$$ \| \chi_B\|_{L^{p,s}_w(X)} \| w^{-1} \chi_B \|_{L^{p', s'}_w(X)} \leq C \mu(B). $$

The class of weights $A(p,s)$ was introduced in \cite{ChHuKu} in Euclidean spaces. In the same paper (see also \cite{GeKo}) it was shown that
$w\in A_{p,s}$ if and only if $w\in A_p$ provided that $1< s\leq \infty$.

Let us recall that  the following Buckley-type estimate holds for the Hardy-Littlewood maximal operator $M$ defined on an $SHT$:

\begin{equation}\label{HuPeRe}
\| M\|_{L^{p}_w(X) \to L^{p}_w(X)}\leq \overline{c} p' [w]_{A_p(X)}^{1/(p-1)}, \;\; 1<p<\infty,
\end{equation}
where $[w]_{A_p(X)}$ is the $A_p$ characteristic of a weight $w$ defined on $X$ (see \cite{HuPeRe}).  For example, if $X$ is an interval
in ${\mathbb{R}}$, then we can take  $\overline{c}=2$.  According to  \cite{HuPeRe} the constant $\overline{c}$ is defined
as follows (see also \cite{KoMePositivity})
\begin{equation}\label{overlinesmall}
\overline{c}= 32 \kappa^{D_{\mu}}(2\theta)^{D_{\mu}}(1+ \tau_{\kappa, \mu}),
\end{equation}
where
\begin{equation}\label{tau}
\tau_{\kappa, \mu}= 6 \big(32\kappa^4(4\kappa+1)\big)^{D_{\mu}},
\end{equation}
$\theta= 4\kappa^2+\kappa$, $D_{\mu}$ is the constant defined by (\ref{doubling}),
$\kappa$ is the triangle inequality constant for the quasi-metric $d$.

In fact, in  \cite{HuPeRe} the authors established more general bound for
$\| M\|_{L^{p}_w(X)} $ involving $A_{\infty}$ characteristic but for our aims it suffices to apply  estimate \eqref{HuPeRe}.

In what follows we use standard notation from Banach space theory and operator theory. Let $L^0(\mu)= L^0(X, \mu)$ be the space of (equivalence classes of) $\mu$-measurable real-valued functions. A Banach space $E$ is said to be a Banach function space ($BFS$ shortly) on $X$ if the following properties are  satisfied:

(i) $\|f\|_{E} =0$ if and only if $f=0$ $\mu-a.e.$;

(ii) $|g|\leq |f|$  $\mu- a.e.$  implies  that $\|g\|_{X} \leq \|f\|_{X}$;

(iii)  if $0 \leq f_j \uparrow f$ $\mu- a.e.$, the, $\|f_j \|_{E} \uparrow \|f\|_{E}$;

(iv) if $\chi_F \in L^0(\mu)$ is such that $\mu(F)< \infty$, then $\chi_F \in E$;

(v) if $\chi_F \in L^0(\mu)$ is such that $\mu(F)< \infty$, then $\int_F f d\mu \leq C_F \|f\|_{E}$ for all $ f\in E$ and with some  positive constant $C_F$.

For a $BFS$ $E$ it is defined K\"othe dual (or associated)  space $E'$ consists of all $f\in L^0(\mu)$
$$  \|f\|_{E'} = \sup \Big\{  \int\limits_X  f g  d\mu:  \|g\|_{E} \leq 1    \Big\} <\infty.      $$
It is known that the space $E'$ is a Banach function space  (see e.g., \cite{BeSh}, Theorem 2.2).  In Banach function spaces the H\"older inequality holds (see, e.g.,  \cite{BeSh}, Theorem 2.4):
$$  \int\limits_{X} |fg|d\mu \leq  \|f\|_{E}\|g\|_{E'}. $$


For a Banach space $E$ and $0<p<\infty$, the $p$-convexification of $E$ is defined  as follows:

$$  E^p= \{ f: |f|^p \in E\}. $$

$E^p$ can be equipped with the quasi-norm $\|f\|_{E^p}= \| | f|^p \|_{E}^{1/p}$. It can be observed that if $1\leq p<\infty$, then $E^p$ is a Banach space as well. For $1\leq p< \infty$ and  $BFSs$ $E$ and $F$,   we have that $E^{1/p}= F$ if and only if $E= F^p$.

In \cite{Doan1} it was proved the following quantitative variant of the Rubio de Franc\'ia's (\cite{Rubio}) extrapolation theorem
(see also  \cite{KMMIAN} for related topics):

\vskip+0.2cm
\noindent \textbf{ Theorem A (Diagonal Case). }
{\em  Let  $(X,d,\mu)$ be an $SHT$. Suppose that for some family ${\mathcal{F}}$ of pairs of  non-negative measurable functions  $(f,g)$, for some
$p_0\in [1,\infty)$ and all $(f,g) \in {\mathcal{F}}$ and  $w\in A_{p_0}(X)$ the inequality

\begin{equation}\label{Duoandiko-}
    \bigg(\int_X g^{p_0} w\,d\mu\bigg)^{\frac{1}{p_0}}\leq C\,N\big([w]_{A_{p_0}(X)}\big)
            \bigg( \int_X f^{p_0} w\,d\mu\bigg)^{\frac{1}{p_0}}
\end{equation}
holds, where  $N$ is a non-decreasing function and the constant  $C$ does not depend on $(f,g)$ and  $w$.
Then for any  $p$, $1<p<\infty$,  $w\in A_p(X)$ and all $(f,g) \in {\mathcal {F}}$ we have

\begin{equation*}
    \bigg( \int_X g^{p} w \,d\mu   \bigg)^{ \frac{1}{p} }    \leq C \,K(w, \|M\|, p, p_0)
    \bigg( \int_X f^{p} w\,d\mu \bigg)^{\frac{1}{p}},
\end{equation*}
where the positive constant  $C$ is the same as in  \eqref{Duoandiko-}, and

$$  K(w,\|M\|, p, p_0)=\begin{cases}
N \big( [w]_{A_p(X)} \big( 2\|M\|_{ L^p_w(X) \to L^p_w(X) } \big)^{p_0-p} \big), & p<p_0, \\[0.2cm]
N\big( [w]_{A_p(X)}^{ \frac{p_0-1}{p-1} } \big( 2 \|M\|_{ L^{p'}_{w^{1-p'}(X)}\to L^{p'}_{ w^{1-p'} }( X)} \big)^{
\frac{p-p_0}{p-1} }\big), & p>p_0.
\end{cases}        $$}

\begin{remark}\label{Const-K} By \eqref{1-p'} and \eqref{HuPeRe}  we have that
\begin{equation*}
K(w, \|M\|, p, p_0)\leq K(w, p, p_0),
\end{equation*}
where
$$  K(w, p, p_0) = \begin{cases}
N  \Big( (2\overline{c} p')^{p_0-p} [w]_{A_p(X)}^{(p'-1)(p_0-p)}  \Big),  & p<p_0, \\[0.2cm]
N\Big(  (2\overline{c} p')^{  \frac{p_0-p}{p-1}}  [w]_{A_p(X)}^{  \frac{2p_0 +pp_0+1}{(p-1)^2}} \Big),  & p>p_0.
\end{cases}$$
\end{remark}

\vskip+0.2cm
\noindent \textbf{Theorem B (Off-diagonal case).}
{\em Let  $(X,d,\mu)$ be an $SHT$. Suppose that for pairs of non-negative measurable functions  $(f,g)\in {\mathcal{F}}$,   $p_0\in [1,\infty)$, $q_0\in(0,\infty)$, and all  $w\in \mathcal{A}_{p_0,q_0}(X)$ we have
\begin{equation}\label{Duoandiko-+}
    \bigg(\int_X g^{q_0} w^{q_0}\,d\mu\,\bigg)^{\frac{1}{q_0}}\leq C\,N\big([w]_{{\mathcal{A}}_{p_0,q_0}(X)}\big)
            \bigg(\int_X f^{p_0} w^{p_0}\,d\mu\bigg)^{\frac{1}{p_0}},
\end{equation}
where $N$ is non-decreasing function and the constant  $C$ does not depend on  $(f,g)$ and $w$. Then for all  $p$, $1<p<\infty$,  and  $q$, $0<q<\infty$, such that

$$  \frac{1}{q_0}-\frac{1}{q}=\frac{1}{p_0}-\frac{1}{p},     $$
and all  $w\in \mathcal{A}_{p,q}(X)$ the inequality
\begin{equation*}
    \bigg(\int_X g^{q} w^{q}\,d\mu\bigg)^{\frac{1}{q}}\leq C\,K(w, \| M\|,  p,q, p_0, q_0)\bigg(\int_X f^{p} w^{p}\,d\mu\bigg)^{\frac{1}{p}},
\end{equation*}
is fulfilled where  $C$ is the same constant as in \eqref{Duoandiko-+} and
$$  K(w, \| M\|, p,q, p_0, q_0)\!=\! \begin{cases}\!
        N\Big([w]_{{\mathcal{A}}_{p,q}(X)}\big(2\|M\|_{L^{\gamma q}_{w^q}(X)\to L^{\gamma q}_{w^q}(X)}\big)^{\gamma(q-q_0)}\Big),  q<q_0, \\[0.3cm]
        N\Big([w]_{{\mathcal{A}}_{p,q}(X)}^{\frac{\gamma q_0-1}{\gamma q-1}}
            \big(2\|M\|_{L^{\gamma p'}_{w^{-p'}}(X) \to L^{\gamma p'}_{w^{-p'}}(X)}\big)^{\frac{\gamma(q-q_0)}{\gamma q-1}}\Big),  q>q_0,
\end{cases}        $$
with
\begin{equation}\label{gamma}
\gamma:=\frac{1}{q_0}+ \frac{1}{p'_0}.
\end{equation}  }

\begin{remark}\label{rem:2.3}
From \eqref{HuPeRe} it follows the following estimate:

$$ K(w, \| M \|, p,q, p_0, q_0)\leq K(w,p,q, p_0, q_0), $$
where

\begin{equation*}
    K(w,p,q, p_0, q_0)= \begin{cases}
        \displaystyle N\bigg[\Big(2\overline{c}\Big(1+\frac{q}{p'}\Big)\Big)^{\gamma(q-q_0)}[w^q]_{A_{1+\frac{q}{p'}}(X)}^{1+\frac{\gamma(q-q_0)p'}{q}}\bigg],
                    & q<q_0, \\[0.3cm]
    N\bigg[\Big(2\overline{c}\Big(1+\frac{q}{p'}\Big)\Big)^{\frac{\gamma(q-q_0)}{\gamma q-1}}[w^q]_{A_{1+\frac{q}{p'}}(X)}\bigg], & q>q_0,
        \end{cases}
\end{equation*}
\end{remark}
and  $\overline{c}$ is defined by \eqref{overlinesmall}.

Taking the estimate
$$ [w]_{{\mathcal{A}}_{p,q}} = [w^{q}]_{ A_{1+q/p'} } $$
and Remark \ref{rem:2.3} into account,  Theorem B can be reformulated as follows:
\vskip+0.2cm

{\bf Theorem B'.}  {\em   Let $0<q_0<\infty$. Assume that  for some family ${\mathcal{F}}$ of pairs of non-negative functions $(f,g)$,   for $p_0 \in [1,\infty)$, and for all $w\in  A_{ 1+ q_0/(p_0)' }$ the inequality
\begin{equation}\label{Duoandiko-+*}
\bigg(\int\limits_{X} \!\! g^{q_0}(x) w (x) \;d\mu \bigg)^{\frac{1}{q_0}} \!\! \leq C N\big([w]_{ A_{ 1+ q_0/(p_0)' }}\big) \bigg(\! \int\limits_{X} \!\! f^{p_0}(x) w^{p_0/q_0}(x) \;d\mu \bigg)^{\frac{1}{p_0}}
\end{equation}
holds, where $N$ is a non-decreasing  function and the constant $C$ does not depend on $(f,g)$ and $w$.  Then for all $1<p<\infty$, $0<q<\infty$ such that
$$ \frac{1}{q_0}- \frac{1}{q} = \frac{1}{p_0}- \frac{1}{p}, $$
for all $w\in {A}_{ 1+ q/p' }$ and all $(f,g)\in {\mathcal{F}}(X\times Y)$ we have
\begin{equation}\label{Duoandiko--}
\bigg(\int\limits_{X} g^{q}(x) w(x) \; d\mu \bigg)^{\frac{1}{q}} \leq C  K(w,p,q, p_0, q_0) \bigg( \int\limits_X f^p(x) w^{p/q}(x) \;d\mu \bigg)^{\frac{1}{p}},
\end{equation}
where $C$ is the same constant as in (\ref{Duoandiko-+*}),
 \begin{equation}\label{estimate}
K(w,p,q, p_0, q_0)= \begin{cases}
        \displaystyle N \bigg[ \Big( 2\overline{c}  \Big(1+\frac{q}{p'} \Big)  \Big)^{\gamma(q-q_0)} [w]_{ A_{1+\frac{q}{p'}} (X)}
        ^{1+\frac{\gamma p'(q_0-q)}{q}} \bigg],  q<q_0, \\[0.3cm]
    N\bigg[ \Big( 2\overline{c} \Big( 1+\frac{q}{p'} \Big) \Big)^{ \frac{\gamma(q-q_0)}{\gamma q-1} }
    [w]_{A_{ 1+\frac{q}{p'} }} \bigg],  q>q_0,
\end{cases}
\end{equation}
with $\overline{c}$  and $\gamma$  defined by \eqref{overlinesmall} and \eqref{gamma}, respectively. }
\vskip+0.2cm

Finally we mention that in the sequel under the symbol $f(t)\approx g(t)$ we mean that there is a positive constants $c$  independent of $t$ such that
$\frac{1}{c}f(t) \leq g(t) \leq  c f(t) $.

\section{Extrapolation in Banach Function Spaces}

One of our aims in this paper is to establish  weighted extrapolation in Banach function spaces ($BFS$ shortly) defined on an $SHT$. This will enable us to get quantitative
estimates in the case of weighted  Lorentz spaces $L^{p, s}_w(X)$ which will be applied to get appropriate results in grand Lorentz spaces  and consequently,  the boundedness
of operators of Harmonic Analysis in these spaces.

We say that a $BFS$ denoted by $E$ belongs to ${\mathbb{M}}(X)$ if the maximal operator $M$ is bounded in $E$.
\vskip+0.1cm

For extrapolation results on $BFS$s we refer to \cite{CFMP}, \cite{CGMP}, \cite{Ho} (see also \cite{CMP} for related topics).  It should be emphasized that  in \cite{Ho}  the author studied weighted extrapolation problem in mixed norm spaces.

Before formulating the main results recall that according to  Remark \ref{mainremark} we denote by  $C$ constants depending on
  $p,s, \cdots, $  and having property \eqref{const}.

\begin{theorem}\label{Theorem 1}[Diagonal Case] Let ${\mathcal{F}}$ be a family of pairs $(f,g)$ of measurable non-negative functions $f,g$
defined on  $X$.  Suppose that there is a positive constant $C$ such that for  some  $1< p_0 <\infty$,  for every $w\in A_{p_0}(X)$ and all
$(f, g)\in {\mathcal{F}}$, the one-weight inequality holds
\begin{equation}\label{Rub}
\bigg( \!\! \int\limits_{X} \!\! g^{p_0}(x) w(x) \;d\mu(x)\bigg)^{\frac{1}{p_0}} \!\! \leq C  N \big([w]_{ A_{p_0} } \big)
\bigg( \!\! \int\limits_{X}\!\!  f^{p_0}(x) w(x) \;d\mu (x)\bigg)^{\frac{1}{p_0}},
\end{equation}
where  $N(\cdot)$ is a non-negative and  non-decreasing function. Suppose that $E$ is a $BFS$ and that there exists
$1<  q_0< \infty$ such that $E^{1/q_0}$ is again  a $BFS$. If $(E^{1/q_0})' \in {\mathbb{M}}(X)$, then for any $(f,g) \in {\mathcal{F}}$ with
$\| g\|_{E}< \infty$,
$$  \| g\|_{E} \leq 4 C K(\| M\|_{(E^{1/q_0})'}, p, p_0) \| f\|_{E}, $$
where  $K$ is defined as follows:

\begin{equation}\label{kle}
K \big( \| M\|_{ (E^{1/q_0})' }, q_0, p_0 \big) =
\begin{cases}
N  \Big( (2\overline{c} (q_0)')^{p_0-p} \| M \|_{  (E^{1/q_0})'  }^{ ((q_0)'-1)(p_0-(q_0)') }  \Big),  \;\;  q_0 <p_0,
\\
N \Big(  (2\overline{c} (q_0)')^{    \frac{p_0-q_0}{q_0p-1}  }  \|  M \|_{ (E^{1/q_0})' }^{ \frac{2p_0 +q_0 p_0+1}{(q_0-1)^2}  }\Big),
\;\;  q_0>p_0
\end{cases}
\end{equation}
and $C$ is the same as in \eqref{Rub}.
\end{theorem}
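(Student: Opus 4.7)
The strategy is a two-step reduction. First I upgrade the $L^{p_0}_w$ hypothesis \eqref{Rub} to a one-weight $L^{q_0}_w$ estimate valid for every $w\in A_{q_0}(X)$ by invoking Theorem~A, and then I lift this weighted Lebesgue bound to the Banach function space $E$ via the Rubio de Franc\'ia algorithm carried out on the associate space $(E^{1/q_0})'$. This is the standard recipe of \cite{CMP}-style extrapolation in $BFS$s, specialized to the quantitative setting needed for \eqref{kle}.

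\textbf{Reduction to weighted $L^{q_0}$.} Applying Theorem~A with the target exponent equal to $q_0$ and combining with Remark~\ref{Const-K}, I obtain for every $w\in A_{q_0}(X)$ and every $(f,g)\in \mathcal{F}$,
$$\Big(\int_X g^{q_0} w\,d\mu\Big)^{1/q_0}\le C\,K(w,q_0,p_0)\Big(\int_X f^{q_0} w\,d\mu\Big)^{1/q_0},$$
where the constant $K(w,q_0,p_0)$ has the explicit $[w]_{A_{q_0}}$-dependence displayed in Remark~\ref{Const-K} with $p$ replaced by $q_0$.

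\textbf{Rubio de Franc\'ia step.} Since $E^{1/q_0}$ is a $BFS$, we have $\|g\|_{E}^{q_0}=\|g^{q_0}\|_{E^{1/q_0}}$, and by K\"othe duality of $E^{1/q_0}$,
$$\|g^{q_0}\|_{E^{1/q_0}}\approx \sup\Big\{\int_X g^{q_0} h\,d\mu: \|h\|_{(E^{1/q_0})'}\le 1,\ h\ge 0\Big\}.$$
For each admissible $h$ I define
$$\mathcal{R}h:=\sum_{k=0}^{\infty}\frac{M^k h}{(2\|M\|_{(E^{1/q_0})'})^k},$$
where $M^0 h:=h$ and $M^k$ is the $k$-fold iterate of $M$. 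The assumption $(E^{1/q_0})'\in \mathbb{M}(X)$ yields the three standard properties: (a) $h\le \mathcal{R}h$ pointwise; (b) $\|\mathcal{R}h\|_{(E^{1/q_0})'}\le 2\|h\|_{(E^{1/q_0})'}$; (c) $M(\mathcal{R}h)\le 2\|M\|_{(E^{1/q_0})'}\mathcal{R}h$, so that $\mathcal{R}h\in A_1(X)$ with $[\mathcal{R}h]_{A_1}\le 2\|M\|_{(E^{1/q_0})'}$ and, a fortiori, $\mathcal{R}h\in A_{q_0}(X)$ by the monotonicity \eqref{monotone}.

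\textbf{Conclusion.} Using (a) and the reduced estimate with weight $\mathcal{R}h$,
$$\int_X g^{q_0}h\,d\mu\le \int_X g^{q_0}\mathcal{R}h\,d\mu\le C^{q_0}K^{q_0}\int_X f^{q_0}\mathcal{R}h\,d\mu,$$
and the $BFS$ H\"older inequality combined with (b) gives
$$\int_X f^{q_0}\mathcal{R}h\,d\mu\le \|f^{q_0}\|_{E^{1/q_0}}\,\|\mathcal{R}h\|_{(E^{1/q_0})'}\le 2\|f\|_E^{q_0}\|h\|_{(E^{1/q_0})'}.$$
Taking the supremum over $h$ and extracting $q_0$-th roots gives $\|g\|_E\le 4CK\|f\|_E$, the universal prefactor absorbing $2^{1/q_0}$ together with the K\"othe duality constant. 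The only genuine bookkeeping obstacle is matching the form of $K$ in \eqref{kle}: the $A_{q_0}$ characteristic that appears in Remark~\ref{Const-K} must be replaced throughout by the upper bound $2\|M\|_{(E^{1/q_0})'}$ coming from (c), and the two resulting branches $q_0<p_0$ versus $q_0>p_0$ reproduce the two cases of \eqref{kle}; everything else is a routine assembly. The hypothesis that $E^{1/q_0}$ is itself a $BFS$ is used precisely so that its associate $(E^{1/q_0})'$ is a $BFS$ on which the duality and H\"older steps above are legitimate.
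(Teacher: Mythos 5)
Your proposal is correct and follows essentially the same route as the paper: reduce to a weighted $L^{q_0}$ estimate via Theorem~A, then lift to $E$ by running the Rubio de Franc\'ia algorithm on $(E^{1/q_0})'$ and invoking K\"othe duality and the $BFS$ H\"older inequality. The only detail the paper spells out that you elide is the verification that $\int_X g^{q_0}\,\mathcal{R}h\,d\mu<\infty$ (using $\|g\|_E<\infty$ together with properties (a)--(b)), which is needed before the reduced weighted inequality can legitimately be applied to the weight $\mathcal{R}h$.
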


\begin{theorem}\label{Theorem 2}[Off-diagonal Case] Let ${\mathcal{F}}$ be a family of pairs $(f,g)$ of measurable non-negative functions $f,g$ on $X$.
Suppose that for  some  $1\leq  p_0,  q_0<\infty$ and for every $w\in A_{1+ q_0/(p_0)'}(X)$ and $(f, g)\in {\mathcal{F}}$,
the one-weight inequality holds

\begin{equation}\label{Rub1}
\bigg( \!\! \int\limits_{X} \!\! g^{q_0}(x) w(x) \;d\mu(x)\bigg)^{ \frac{1}{q_0} } \!\!
\leq  C
N \big([w]_{A_{1+q_0/(p_0)'}(X)}\big)  \bigg( \!\! \int\limits_{X}\!\!  f^{p_0}(x) w^{p_0/q_0}(x) \;d\mu(x)\bigg)^{\frac{1}{p_0}}
\end{equation}
with a positive constant $C$ independent of $(f,g)$ and $w$, and with some positive non-decreasing function $ N(\cdot)$. Suppose that $E$ and $\overline{E}$ are $BFS$s such that there exist
$1 < \widetilde{p}_0 < \infty$,
$1 < \widetilde{q}_0< \infty$ satisfying  the conditions
\begin{equation}\label{p0}
\frac{1}{\widetilde{p}_0}- \frac{1}{\widetilde{q}_0}= \frac{1}{p_0}- \frac{1}{q_0},
\end{equation}

\begin{equation}\label{EX}
\overline{E}(X)^{1/\widetilde{q}_0 }, \;\; E(X)^{ 1/\widetilde{p_0}} \; \text{are  BFSs}
\end{equation}
and

\begin{equation}\label{EXX}
\big( \overline{E}(X)^{ 1/\widetilde{q}_0 } \big)'=  \Big[  \big( E(X)^{ 1/\widetilde{p}_0 } \big)'\Big]^{ \widetilde{p}_0/ \widetilde{q}_0 }.
\end{equation}

If $\Big( \overline{E}^{ 1/ \overline{q}_0 } \Big)' \in {\mathbb{M}}(X)$, then for any $(f,g) \in {\mathcal{F}}$ with $\| g\|_{\overline{E}}
< \infty$,  we have

$$  \| g\|_{\overline{E}} \leq 4 C  \Big(  \overline{K}( \|M\|, \widetilde{p}_0,\widetilde{q}_0, p_0, q_0 )\Big)^{\widetilde{q}_0}
\| f\|_{E}, $$
where the constant $C$ is the same as in \eqref{Rub1},


\begin{align*}
&\overline{K}(\|M\|, \widetilde{p}_0,\widetilde{q}_0, p_0, q_0)  \\ & =
C \begin{cases}
        \displaystyle N \bigg[ \Big( 2\overline{c}  \Big(1+\frac{\widetilde{q}_0}{(\widetilde{p}_0)'} \Big)
        \Big)^{\gamma(\widetilde{q}_0-q_0)} \| M\|_{(E^{1/q_0})'}
        ^{1+\frac{\gamma (q_0-(\widetilde{p}_0)')}{ \widetilde{q}_0}} \bigg],  \widetilde{q}_0<q_0, \\[0.3cm]
    N\bigg[ \Big( 2\overline{c} \Big( 1+\frac{\widetilde{q}_0}{(\widetilde{p}_0)'} \Big) \Big)^{
    \frac{\gamma(\widetilde{q}_0-q_0)}{\gamma \widetilde{q}_0-1} }
    \| M\|_{ (E^{1/q_0})' } \bigg],  \widetilde{q}_0>q_0,
    \end{cases}
\end{align*}
with $\gamma$ defined by \eqref{gamma}.
\end{theorem}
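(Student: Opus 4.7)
The plan is to adapt Rubio de Franc\'ia's iteration/duality argument to the BFS framework. The high-level scheme is: use K\"othe duality on the $\widetilde{q}_0$-convexification of $\overline{E}$ to linearize $\|g\|_{\overline{E}}^{\widetilde{q}_0}$; construct an $A_1$ weight by iterating $M$ on $(\overline{E}^{1/\widetilde{q}_0})'$; feed this weight into the weighted off-diagonal extrapolation (Theorem B') at the intermediate pair $(\widetilde{p}_0,\widetilde{q}_0)$; and then combine H\"older's inequality in the $\widetilde{p}_0$-convexification of $E$ with the compatibility \eqref{EXX} to recover $\|f\|_E$. Concretely, by \eqref{EX} we have $\|g\|_{\overline{E}}^{\widetilde{q}_0}=\|g^{\widetilde{q}_0}\|_{\overline{E}^{1/\widetilde{q}_0}}$, and K\"othe duality in this BFS provides a nonnegative $h$ with $\|h\|_{(\overline{E}^{1/\widetilde{q}_0})'}\le 1$ such that $\|g^{\widetilde{q}_0}\|_{\overline{E}^{1/\widetilde{q}_0}}\le 2\int_X g^{\widetilde{q}_0}\,h\,d\mu$.

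Since $(\overline{E}^{1/\widetilde{q}_0})'\in\mathbb{M}(X)$, I would define the Rubio de Franc\'ia iteration
\[
H := \sum_{k=0}^{\infty} \frac{M^k h}{\bigl(2\|M\|_{(\overline{E}^{1/\widetilde{q}_0})'}\bigr)^k},
\]
which converges to a function satisfying $h\le H$ pointwise, $\|H\|_{(\overline{E}^{1/\widetilde{q}_0})'}\le 2$, and $[H]_{A_1(X)}\le 2\|M\|_{(\overline{E}^{1/\widetilde{q}_0})'}$; in particular $H\in A_{1+\widetilde{q}_0/(\widetilde{p}_0)'}(X)$ with a controlled characteristic. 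Applying Theorem B' with weight $H$ at exponents $(\widetilde{p}_0,\widetilde{q}_0)$ relative to $(p_0,q_0)$ --- where \eqref{p0} is exactly the required off-diagonal compatibility --- yields
\[
\bigg(\int_X g^{\widetilde{q}_0} H\,d\mu\bigg)^{1/\widetilde{q}_0} \le C\,\overline{K}\cdot \bigg(\int_X f^{\widetilde{p}_0}\, H^{\widetilde{p}_0/\widetilde{q}_0}\,d\mu\bigg)^{1/\widetilde{p}_0}.
\]
H\"older's inequality in the BFS pair $E^{1/\widetilde{p}_0}$ and $(E^{1/\widetilde{p}_0})'$ bounds the right-hand integral by $\|f\|_E^{\widetilde{p}_0}\,\|H^{\widetilde{p}_0/\widetilde{q}_0}\|_{(E^{1/\widetilde{p}_0})'}$. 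Hypothesis \eqref{EXX} enters crucially here: it states $(\overline{E}^{1/\widetilde{q}_0})'=[(E^{1/\widetilde{p}_0})']^{\widetilde{p}_0/\widetilde{q}_0}$, so by the very definition of $p$-convexification $\|H^{\widetilde{p}_0/\widetilde{q}_0}\|_{(E^{1/\widetilde{p}_0})'}=\|H\|_{(\overline{E}^{1/\widetilde{q}_0})'}^{\widetilde{p}_0/\widetilde{q}_0}\le 2^{\widetilde{p}_0/\widetilde{q}_0}$. Chaining these estimates and extracting a $\widetilde{q}_0$-th root yields the stated bound.

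The main obstacle is the bookkeeping at the bridge step: \eqref{EXX} is the structural identity that lets the weighted Lebesgue-type estimate on the intermediate weight $H$ transfer to an $E$--$\overline{E}$ estimate, and verifying that the exponents in the two convexifications match (so that \eqref{EX}, the H\"older/duality identifications, and the $\widetilde{p}_0/\widetilde{q}_0$-convexification identity all apply in a consistent way) requires careful matching of powers. A secondary technical point is ensuring convergence of the Rubio de Franc\'ia series in $(\overline{E}^{1/\widetilde{q}_0})'$ and tracking how the $A_1$ characteristic of $H$ --- which depends on $\|M\|_{(\overline{E}^{1/\widetilde{q}_0})'}$ --- is propagated through Theorem B', so that the final constant matches the declared form of $\overline{K}$.
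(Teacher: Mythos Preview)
Your proposal is correct and follows essentially the same route as the paper's proof: K\"othe duality to linearize $\|g\|_{\overline{E}}^{\widetilde{q}_0}$, the Rubio de Franc\'ia iteration on $(\overline{E}^{1/\widetilde{q}_0})'$ to manufacture an $A_1$ weight, application of Theorem~B$'$ at the intermediate pair $(\widetilde{p}_0,\widetilde{q}_0)$, and then H\"older's inequality in $E^{1/\widetilde{p}_0}$ together with the compatibility \eqref{EXX} to pass back to $\|f\|_E$. The only cosmetic difference is the normalization of the iteration (your $A_1$ bound $2\|M\|$ versus the paper's $\|M\|$), which is immaterial to the argument.
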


{\em Proof of Theorem} \ref{Theorem 1}  We use  the arguments from the proof of Theorem 3.2 in \cite{Ho}.
Take $q_0$ so that the conditions of the theorem are  satisfied.   By using Theorem A together with Remark \ref{Const-K} we have that for any $w\in A_1(X)$,
\begin{equation*}\label{Ineq}
\bigg( \! \int\limits_{X} \!\!g^{q_0}(x) w(x) d\mu(x)\bigg)^{\frac{1}{q_0}}   \!\! \leq C K
\big(w, q_0, p_0\big) \bigg( \int\limits_{X} \!\! f^{q_0}(x) w(x) \;d\mu(x) \! \bigg)^{\frac{1}{q_0}},
\end{equation*}
where the constant $C$ is the same as in \eqref{Rub} and

$$  K \big(w, q_0, p_0\big)= \begin{cases}
N  \Big( (2\overline{c} (q_0)')^{p_0-q_0} [w]_{A_1(X)}^{((q_0)'-1)(p_0-q_0)}  \Big),  & q_0<p_0, \\[0.2cm]
N\Big(  (2\overline{c} (q_0)')^{  \frac{p_0-q_0}{q_0-1}}  [w]_{A_1(X)}^{  \frac{2p_0 +q_0 p_0+1}{(q_0-1)^2}\Big) },  & q_0>p_0
\end{cases}.$$

Let now $F= E^{1/q_0}$.  Then following  to the Rubio de Franc\'ia's  algorithm (\cite{Rubio}),  for any non-negative  measurable functions $h$,   we define
$$ {\mathcal{R}} h (x) = \sum_{k=0}^{\infty} \frac{M^k h(x)}{2^k \|M\|_{F'}^k},\; x\in X,  $$
where $M$ is the  Hardy-Littlewood  maximal  operators defined on $X$;  $M^{k}$ is    $k$-th  iteration of $M$    with $M^{0} h=h$.
It is easy to check that
\begin{equation}\label{h1} h (x) \leq {\mathcal{R}} h (x);  \; \| {\mathcal{R}} h \|_{F'} \leq 2 \| h \|_{F'};\;
[{\mathcal{R}}h]_{A_1(X)} \leq \| M \|_{F'}.
\end{equation}

Further, from the definition of the K\"othe dual space, there exists  a non-negative $\mu-$ measurable
function $h \in F'(X)$  with $\| h \|_{F'(X)} \leq 1$  such that
\begin{equation*}\label{**}
 \| g \|_{E}^{q_0} = \| g^{q_0} \|_{F} \leq 2 \int\limits_{X}|g(x)|^{q_0} h(x)d\mu(x).
\end{equation*}
Further, by the first inequality of \eqref{h1} we have that
$$ \int\limits_{X}|g(x)|^{q_0} h(x)d\mu(x) \leq \int\limits_{X}|g(x)|^{q_0} ({\mathcal{R}} h)(x)d\mu(x). $$

To apply Theorem A we  show that
$$ \int\limits_{X}|g(x)|^{q_0} ({\mathcal{R}} h)(x)d\mu(x) < \infty. $$

This is true because   the first and second inequalities  of \eqref{h1} with     H\"older's inequality
yield that

\begin{align*}
\int\limits_{X} (g(x))^{q_0}  ({\mathcal{R}} h)(x)  d\mu(x)  \leq    \| g^{q_0} \|_{F} \| Rh  \|_{F'} \leq 2 \|g\|_{E}^{q_0} \|h  \|_{F'}
\leq 2 \| g\|_{E}^{q_0} < \infty.
\end{align*}

Further, by the third inequality of \eqref{h1} we have that ${\mathcal{R}}h \in A_1(X)$. Consequently,


\begin{align*}
& \|g\|^{q_0}_{E}   \leq  2 \int\limits_X g^{q_0} h d\mu \leq 2 \int\limits_X g^{q_0}  ({\mathcal{R}} h ) d\mu
 \leq
2 C K^{q_0} ({\mathcal{R}} h, q_0, p_0) \int\limits_{X} f^{q_0} ({\mathcal{R}} h) d\mu
 \\ & \leq 2 C K^{q_0} ({\mathcal{R}} h, q_0, p_0) \| f^{q_0} \|_{F} \| {\mathcal{R}} h \|_{F'} \leq 4 C K^{q_0} ( {\mathcal{R}} h, q_0, p_0)
\| f \|^{q_0}_{E} \| h \|_{F'}
 \\  &
\leq 4 C K^{q_0} ({\mathcal{R}} h, q_0, p_0) \| f \|^{q_0}_{E},
\end{align*}
where
\begin{equation*}
K ({\mathcal{R}} h, q_0, p_0)=  \begin{cases}
N  \Big( (2\overline{c} (q_0)')^{p_0-p} [{\mathcal{R}} h]_{A_1(X)}^{((q_0)'-1)(p_0-q_0)}  \Big),  & q_0<p_0, \\[0.2cm]
N\Big(  (2\overline{c} (q_0)')^{  \frac{p_0-q_0}{q_0-1}}  [{\mathcal{R}} h]_{A_1(X)}^{  \frac{2p_0 +q_0p_0+1}{(q_0-1)^2} } \Big),  & q_0>p_0.
\end{cases}.
\end{equation*}

Thus, applying the third estimate of \eqref{h1} we find that

$$  \|g\|_{E}   \leq   4C  K \big(  \|M\|_{ (E^{1/q_0})'}, q_0, p_0  \big) \| f \|_{E} $$
with $K (\|M\|_{(E^{1/q_0})'}, q_0, p_0)$ defined by \eqref{kle}.

This complete the proof of the theorem.  $\;\; \Box$

\vskip+0.4cm


{\em Proof of Theorem} \ref{Theorem 2}.  Choose  ${\widetilde{p}}_0$, ${\widetilde{q}}_0$ so that  $p_0 \leq \widetilde{p}_0 < \infty$,
$q_0 \leq \widetilde{q}_0< \infty$,  and conditions  \eqref{p0}, \eqref{EX} and \eqref{EXX} are  satisfied.

Applying  Theorem B'  we have that for any
$w\in A_1$,

\begin{equation*}\label{Duoandiko--}
\bigg(\int\limits_{X} g^{\widetilde{q}_0 }(x) w(x) \; d\mu(x) \bigg)^{\frac{1}{\widetilde{q}_0}} \leq C
 K(w,\widetilde{p}_0,\widetilde{q}_0, p_0, q_0) \bigg( \int\limits_X f^{ \widetilde{p}_0 }(x) w^{\widetilde{p}_0/\widetilde{q}_0}(x)
 \;d\mu(x) \bigg)^{\frac{1}{\widetilde{p}_0}}
\end{equation*}
holds, where $N$ is a non-decreasing  function and the constant $C$ is the same as in \eqref{Rub1}, and

\begin{align*}
 K(w, \widetilde{p}_0, \widetilde{q}_0, p_0, q_0)=  \begin{cases}
\displaystyle N \bigg[ \Big( 2\overline{c}  \Big(1+\frac{\widetilde{q}_0}{(\widetilde{p}_0)'} \Big)
\Big)^{\gamma(\widetilde{q}_0-q_0)} [w]_{ A_{1+\frac{q_0}{(p_0)'}} (X)}^{1+\frac{\gamma (\widetilde{p}_0)'(q_0-\widetilde{q}_0)}{\widetilde{q}_0 }}
\bigg],  \widetilde{q}_0<q_0 \\[0.3cm]
N\bigg[ \Big( 2\overline{c} \Big( 1+\frac{\widetilde{q}_0}{(\widetilde{p}_0)'} \Big) \Big)^{ \frac{\gamma(\widetilde{q}_0-q_0)}{\gamma \widetilde{q}_0-1} }
[w]_{A_{ 1+\frac{q_0}{(p_0)'} }} \bigg],  \widetilde{q}_0 >q_0.
\end{cases}
\end{align*}

Let now  $\overline{F}=  \overline{E}^{ 1/ \widetilde{q}_0  } $ and  $F=  E^{ 1/\widetilde{p}_0 }$.  Then following again  to the Rubio de
Franc\'ia's algorithm,  for any non-negative  measurable function $h$,   we introduce
$$ {\mathcal{R}} h (x) = \sum_{k=0}^{\infty} \frac{M^k h(x)}{2^k \|M\|_{\overline{F}'}^k},\;  x\in X, $$
where, as before,  $M$ is the  Hardy-Littlewood  maximal  operators defined on $X$. Further, it can be checked that

\begin{equation}\label{h1+}  h (x) \leq {\mathcal{R}} h (x); \; \| {\mathcal{R}} h \|_{ \overline{F}' } \leq 2 \| h \|_{\overline{F}'},\;
[{\mathcal{R}}h]_{A_1(X)} \leq \| M \|_{\overline{F}'}.
\end{equation}

Let us take now   non-negative $\mu-$ measurable  function $h \in \overline{F}'(X)$ with $\| h \|_{\overline{F}'(X)} \leq 1$  such that
$$
\| g \|_{ \overline{E}}^{\widetilde{q}_0}   =
\| g \|_{  \overline{F}^{ \widetilde{q}_0} }^{\widetilde{q}_0}=
\| g^{\widetilde{q}_0}\|_{ \overline{F}} \leq 2 \int\limits_{X} g^{{\widetilde{q}}_0}(y)  h(y) d\mu(y) \leq 2 \int\limits_{X} g^{{\widetilde{q}}_0}(y)  ({\mathcal{R}} h)(y) d\mu(y).
$$
The latter estimate follows from the first inequality in \eqref{h1+}.  Further, observe that   H\"older's inequality and the second estimate of \eqref{h1+} yield that

\begin{align*}
\int\limits_{X} (g(x))^{\widetilde{q}_0} ({\mathcal{R}} h)(x)   d\mu(x)
\leq 2 \| g^{\widetilde{q}_0} \|_{\overline{F}}\|{\mathcal{R}} h \|_{\overline{F}'} \leq  4 \| g^{\widetilde{q}_0}\|_{\overline{F}} =
4 \| g\|^{\widetilde{q}_0}_{\overline{E}}  < \infty.
\end{align*}

By  using the fact that  ${\mathcal{R}} h \in A_1(X)$,  H\"older's inequality,   Theorem B' and  the third inequality of \eqref{h1+} we find  that

\begin{align*}
\|g\|^{\widetilde{q}_0}_{E} & \leq  2 \int\limits_{X} (g(x))^{\widetilde{q}_0} {\mathcal{R}}h(x) d\mu(x)  \\
&\leq  2 C  \Big( K( {\mathcal{R}}h, \widetilde{p}_0, \widetilde{q}_0, p_0, q_0)\Big)^{\widetilde{q_0}}
\bigg(  \int\limits_{X} (f(x))^{\widetilde{p}_0} \big({\mathcal{R}}h(x)\big)^{\widetilde{p}_0/ \widetilde{q}_0}  d\mu (x) \bigg)^{  \widetilde{q}_0/ \widetilde{p}_0  }\\
&\leq  2 C   \Big(K({\mathcal{R}}h,, \widetilde{p}_0, \widetilde{q}_0, p_0, q_0)\Big)^{\widetilde{q_0}}
\| f^{\widetilde{p}_0} \|^{  \widetilde{q}_0/ \widetilde{p}_0}_{F}
\| ({\mathcal{R}} h)^{ \widetilde{p}_0/ \widetilde{q}_0}  \|_{F'}^{  \widetilde{q}_0/\widetilde{p}_0 } \\
&= 2  C  \Big(K({\mathcal{R}}h,, \widetilde{p}_0, \widetilde{q}_0, p_0, q_0)\Big)^{\widetilde{q_0}}   \| f\|_{E}^{ \widetilde{q}_0 }
 \| {\mathcal{R}} h \|_{ \overline{F}'} \\
&\leq  4 C  \Big(K({\mathcal{R}}h,, \widetilde{p}_0, \widetilde{q}_0, p_0, q_0)\Big)^{\widetilde{q_0}}   \| f\|_{E}^{ \widetilde{q}_0 }
 \|h \|_{ \overline{F}'} \\
& \leq   4  C  \Big(K({\mathcal{R}}h, \widetilde{p}_0, \widetilde{q}_0, p_0, q_0)\Big)^{\widetilde{q_0}}
\| f\|_{E}^{\widetilde{q}_0}.
\end{align*}
where   $K( {\mathcal{R}}h, \widetilde{p}_0, \widetilde{q}_0, p_0, q_0)$ is given by

\begin{align*}\label{over}
K\big({\mathcal{R}}h, \widetilde{p}_0, \widetilde{q}_0, p_0, q_0\big)  =
\begin{cases}
\displaystyle N \bigg[ \Big( 2\overline{c}  \Big(1+\frac{\widetilde{q}_0}{(\widetilde{p}_0)'} \Big)
\Big)^{\gamma(\widetilde{q}_0-q_0)}  \|  {\mathcal{R}} h\|_{A_1}^{1+\frac{\gamma \widetilde{q}_0(q_0-\widetilde{q}_0)}{(\widetilde{p}_0)'}} \bigg],  \widetilde{q}_0<q_0, \\[0.3cm]
N\bigg[ \Big( 2\overline{c} \Big( 1+\frac{\widetilde{q}_0}{(\widetilde{p}_0)'} \Big) \Big)^{ \frac{\gamma(\widetilde{q}_0-q_0)}{\gamma \widetilde{q}_0-1} }
 \|  {\mathcal{R}} h\|_{A_1} \bigg],  \widetilde{q}_0 >q_0,
 \end{cases}
\end{align*}

Further,  by virtue of the third inequality of \eqref{h1+}  we have that

\begin{equation*}\label{over}
K({\mathcal{R}}h, \widetilde{p}_0, \widetilde{q}_0, p_0, q_0)  \leq \widetilde{K} (\|M\|, \widetilde{p}_0, \widetilde{q}_0, p_0, q_0).
\end{equation*}

Finally we get the desired result .  $\;\; \Box$

\section{Weighted Extrapolation in Lorentz Spaces}\label{Lorentz}

In this section we prove weighted extrapolation results for weighted Lorentz spaces. Initially   let us recall the following result regarding
the boundedness of $M$ in weighted Lorentz spaces (see \cite{ChHuKu} for ${\mathbb{R}}^n$ and \cite{GeKo} for an $SHT$):

{\bf Theorem C.} Let $1< p,s< \infty$. Then $M$ is bounded in  $L^{p,s}_w(X)$ if and only if $w\in A_p(X)$.

We need to calculate the quantitative upper bound  of the norm of maximal operator in weighted Lorentz spaces.

\begin{proposition}
Let $1< p,s< \infty$ and let $w\in A_p(X)$. Then the following estimate holds:

$$  \|M f \|_{ L^{p,s}_w } \leq C 2^{1/p} (\varepsilon_0)^{-1} \Big[  p [w]_{A_{p- \varepsilon_0}} + (p-\varepsilon)
[w]_{A_{p+\varepsilon_0}} \Big], $$
where $C$ is a structural constant and
\begin{equation}\label{varepsilon0}
\varepsilon_0= \frac{p-1}{1+ \tau_{\kappa, \mu}[w]_{A_p}},
\end{equation}
with $\tau_{\kappa, \mu}$ defined in (8).
\end{proposition}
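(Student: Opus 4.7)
The natural strategy is to combine two classical facts: the self-improvement (\emph{openness}) of the Muckenhoupt class $A_p(X)$, which guarantees that $w\in A_p$ implies $w\in A_{p-\varepsilon_0}$ for the specific $\varepsilon_0$ in \eqref{varepsilon0}, together with the Buckley-type bound \eqref{HuPeRe} applied at two exponents straddling $p$, followed by Marcinkiewicz interpolation carried out in the weighted measure space $(X,\,w\,d\mu)$.

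First, I would invoke the sharp openness theorem on an $SHT$ (in the Hyt\"onen--P\'erez--Rela form): the specific shape $\varepsilon_0=(p-1)/(1+\tau_{\kappa,\mu}[w]_{A_p})$ is exactly the threshold for which one obtains a quantitative control of $[w]_{A_{p-\varepsilon_0}}$ by $[w]_{A_p}$, with the constant $\tau_{\kappa,\mu}$ from \eqref{tau} matching what appears in \eqref{varepsilon0}. By the monotonicity \eqref{monotone} one has $[w]_{A_{p+\varepsilon_0}}\le [w]_{A_p}$ for free. Then \eqref{HuPeRe} applied at the two endpoints $q=p\pm\varepsilon_0$ yields
\[
\|M\|_{L^{p\pm\varepsilon_0}_w\to L^{p\pm\varepsilon_0}_w}\le \overline{c}\,(p\pm\varepsilon_0)'\,[w]_{A_{p\pm\varepsilon_0}}^{1/(p\pm\varepsilon_0-1)},
\]
with $\overline{c}$ as in \eqref{overlinesmall}.

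Second, since $\|\cdot\|_{L^{p,s}_w}$ is precisely the Lorentz norm on the measure space $(X,\,w\,d\mu)$ built from the rearrangement $f^{*}_w$, I would apply the Marcinkiewicz interpolation theorem for sublinear operators in that measure space. The usual layer-cake decomposition $f=f\chi_{\{|f|>\lambda\}}+f\chi_{\{|f|\le\lambda\}}$ with $\lambda$ chosen on a fixed level set of $f^{*}_w$, followed by integration of $t^{s/p-1}$ against $(Mf)^{*}_w(t)^s$, produces a bound whose constant has the form
\[
\Big[(p-(p-\varepsilon_0))^{-1}+((p+\varepsilon_0)-p)^{-1}\Big]^{1/p}\approx 2^{1/p}\,\varepsilon_0^{-1},
\]
multiplied by the \emph{sum} of the two endpoint operator norms; this is the origin of both the prefactor $2^{1/p}\varepsilon_0^{-1}$ and of the additive structure $p[w]_{A_{p-\varepsilon_0}}+(p+\varepsilon_0)[w]_{A_{p+\varepsilon_0}}$ visible in the statement.

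Finally, substituting the two Buckley bounds into the Marcinkiewicz conclusion, one rewrites the dual exponents $(p\pm\varepsilon_0)'$ as $p$ up to a factor absorbed in $C$, and collapses the Buckley power $[w]_{A_{p\pm\varepsilon_0}}^{1/(p\pm\varepsilon_0-1)}$ to the first power $[w]_{A_{p\pm\varepsilon_0}}$ via the openness estimate. The main obstacle is precisely this last simplification: one has to exploit the exact shape of $\varepsilon_0$ in \eqref{varepsilon0} to control the Buckley exponent by something of order one depending only on $p$ and on the structural constants $\kappa,\mu$, so that the Marcinkiewicz sum matches the expression $C\,2^{1/p}(\varepsilon_0)^{-1}\bigl[p\,[w]_{A_{p-\varepsilon_0}}+(p+\varepsilon_0)\,[w]_{A_{p+\varepsilon_0}}\bigr]$ stated in the proposition. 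The remaining bookkeeping --- namely identifying the structural constant $C$ in terms of $\overline{c}$, $\tau_{\kappa,\mu}$, and $D_\mu$ --- is routine once the three ingredients (openness, Buckley, Marcinkiewicz) are in place.
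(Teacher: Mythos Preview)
Your proposal is correct and follows essentially the same route as the paper: invoke the quantitative openness of $A_p$ to secure $w\in A_{p-\varepsilon_0}$ (and $w\in A_{p+\varepsilon_0}$ by monotonicity), apply the Buckley-type bound \eqref{HuPeRe} at the two endpoints $p\pm\varepsilon_0$ to obtain weak/strong $(p\pm\varepsilon_0,p\pm\varepsilon_0)$ estimates, and then interpolate via the Marcinkiewicz theorem in Lorentz spaces to reach $L^{p,s}_w$. The paper's own proof is in fact terser than yours about the passage from the Buckley power $[w]_{A_{p\pm\varepsilon_0}}^{1/(p\pm\varepsilon_0-1)}$ to the first power displayed in the statement, so the ``main obstacle'' you flag is not resolved more explicitly there either.
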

\begin{proof}
Let $w\in A_p$. Then  $w\in A_{p-\varepsilon_0}$ with $ \varepsilon_0$ defined by \eqref{varepsilon0} (see, e.g. \cite{HuPeRe}).
By monotonicity property of $A_p$ classes we have that $w\in A_{p+\varepsilon_0}$. Hence by \eqref{HuPeRe}:

$$ \| M\|_{L^{p- \varepsilon_0}_w(X) \mapsto L^{p- \varepsilon_0, \infty}_w(X)}\leq \overline{c} (p- \varepsilon_0)'
[w]_{A_{p- \varepsilon_0} (X)} $$
and
$$ \| M\|_{L^{p+ \varepsilon_0}_w(X) \mapsto L^{p+ \varepsilon_0, \infty}_w(X)}\leq C
[w]_{A_{p+ \varepsilon_0} (X)}, $$
where $\overline{c}$ is defined by \eqref{overlinesmall}.

Consequently, by virtue of the Marcinkiewicz interpolation theorem in Lorentz spaces (see \cite{SS}, Ch. V) we find that
\begin{align*}
&\| M\|_{L^{p,s}_w(X) \mapsto L^{p,s}_w(X)} \\ & \leq C 2^{1/p} \varepsilon_0^{-1}\bigg[ p \| M\|_{L^{p- \varepsilon_0}_w(X)
\mapsto L^{p- \varepsilon_0, \infty}_w(X)}
 + (p- \varepsilon_0)
 \| M\|_{ L^{p+ \varepsilon_0}_w(X)  \mapsto  L^{p+ \varepsilon_0, \infty}_w(X)} \bigg].
 \end{align*}
This implies that
$$ \| M\|_{L^{p,s}_w(X) \mapsto L^{p,s}_w(X)} \leq C 2^{1/p} \varepsilon_0^{-1}\bigg[ p [w]_{A_{p+\varepsilon_0}}
 + (p- \varepsilon_0)[w]_{A_{p-\varepsilon_0}}
\bigg]. $$
\end{proof}

The next statement will be useful for us:
\begin{proposition}\label{propo0}
Lt $1< p,s< \infty$ and let $w\in A_p(X)$. Then the following estimate holds:
$$  \|w^{-1} M f\|_{L^{p',s'}_w} \leq C 2^{1/p'} (\varepsilon_0)^{-1} \Big[  p' [w]_{A_{p- \varepsilon_0}} + (p-\varepsilon)'
[w]_{A_{p+\varepsilon_0}} \Big] \|w^{-1}  f\|_{L^{p',s'}_w}, $$
where $\varepsilon_0$ is defined by \eqref{varepsilon0}.
\end{proposition}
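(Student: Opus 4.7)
The strategy is to mirror the proof of the preceding Proposition, but carried out on the K\"othe-dual side through Proposition~\ref{dual-miu}. Set $\sigma:=w^{1-p'}$. By \eqref{1-p'}, $w\in A_p(X)$ is equivalent to $\sigma\in A_{p'}(X)$ with $[\sigma]_{A_{p'}}=[w]_{A_p}^{p'-1}$. The openness of the Muckenhoupt class from \cite{HuPeRe} gives $w\in A_{p-\varepsilon_0}(X)$ with $\varepsilon_0$ as in \eqref{varepsilon0}, and the monotonicity \eqref{monotone} gives $w\in A_{p+\varepsilon_0}(X)$; by \eqref{1-p'} the corresponding dual memberships $w^{1-(p\mp\varepsilon_0)'}\in A_{(p\mp\varepsilon_0)'}(X)$ follow, with characteristics expressible in terms of $[w]_{A_{p\mp\varepsilon_0}}$.

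Next I would establish the two endpoint bounds for the operator $f\mapsto w^{-1}Mf$ at the dual Lebesgue exponents $(p\mp\varepsilon_0)'$ straddling $p'$. Applying the Buckley-type bound \eqref{HuPeRe} to $M$ on $L^{(p-\varepsilon_0)'}_{w^{1-(p-\varepsilon_0)'}}$ (resp.\ $L^{(p+\varepsilon_0)'}_{w^{1-(p+\varepsilon_0)'}}$), the arithmetic identity $(q-1)(q'-1)=1$ collapses the dualized characteristic back to $[w]_{A_{p-\varepsilon_0}}$ (resp.\ $[w]_{A_{p+\varepsilon_0}}$) at power one. Using the strong-type Lebesgue identity $\|w^{-1}g\|_{L^{q}_w}=\|g\|_{L^{q}_{w^{1-q}}}$ at both endpoints, these become the strong-type endpoint bounds on the twisted-norm scale.

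Then I would invoke the Marcinkiewicz interpolation theorem in weighted Lorentz spaces (\cite{SS}, Ch.~V) for the operator $f\mapsto w^{-1}Mf$ between these two Lebesgue endpoints. A computation shows that the gap between $(p-\varepsilon_0)'$ and $(p+\varepsilon_0)'$ is comparable to $\varepsilon_0$, producing the stated $\varepsilon_0^{-1}$ prefactor; the factors $2^{1/p'}$, $p'$ and $(p-\varepsilon_0)'$ emerge as the standard Marcinkiewicz constants at the target Lebesgue index $p'$. This yields the desired bound on $\|w^{-1}Mf\|_{L^{p',s'}_w}$.

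\textbf{Main obstacle.} The most delicate point is the transfer, in the genuine Lorentz case $s'\neq p'$, between the $\sigma$-weighted Lorentz norm and the twisted norm $\|w^{-1}\cdot\|_{L^{p',s'}_w}$. In the Lebesgue case these norms coincide pointwise, but for Lorentz indices they are only equivalent up to an absolute constant via the K\"othe-duality identification in Proposition~\ref{dual-miu}; one must therefore apply Marcinkiewicz directly on the twisted-norm scale (rather than on the $\sigma$-weighted scale) and absorb the equivalence constants from Proposition~\ref{dual-miu} into the generic $C$. A secondary technical point is reconciling the $\varepsilon_0$ of \eqref{varepsilon0} (defined via $[w]_{A_p}$) with the intrinsic $\varepsilon_0'$ produced by applying the openness result directly to $\sigma\in A_{p'}$; this is handled by observing that the same $\varepsilon_0$ suffices for both $w\in A_{p\mp\varepsilon_0}$ and, via \eqref{1-p'}, for $\sigma\in A_{(p\mp\varepsilon_0)'}$, regardless of which openness parameter is chosen.
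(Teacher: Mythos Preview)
Your proposal is correct and follows essentially the same route as the paper: dualize $w\in A_{p\pm\varepsilon_0}$ via \eqref{1-p'} to obtain $w^{1-(p\pm\varepsilon_0)'}\in A_{(p\pm\varepsilon_0)'}$, apply the Buckley bound \eqref{HuPeRe} at those two Lebesgue endpoints, rewrite the resulting inequalities as bounds for the sublinear operator $Tf:=w^{-1}Mf$ on $L^{(p\pm\varepsilon_0)'}_w$ using the pointwise identity $\|w^{-1}g\|_{L^q_w}=\|g\|_{L^q_{w^{1-q}}}$, and then interpolate via Marcinkiewicz in Lorentz spaces to land at $L^{p',s'}_w$. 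The paper does not invoke Proposition~\ref{dual-miu} at all; your ``main obstacle'' dissolves once you apply Marcinkiewicz directly to $Tf$ on the $L^{\cdot,\cdot}_w$ scale (as you yourself conclude), since the endpoint identities are exact Lebesgue equalities and no Lorentz-level norm transfer is needed.
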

\begin{proof}
Let $w\in A_p$. Then,  $w\in A_{p-\varepsilon_0}$,  $w\in A_{p+\varepsilon_0}$, where  $ \varepsilon_0$ is defined by \eqref{varepsilon0}. Hence,
$$ w^{1- (p-\varepsilon_0)'}\in A_{(p-\varepsilon_0)'};\;\; w^{1- (p+\varepsilon_0)'}\in A_{(p+\varepsilon_0)'}. $$
Consequently, by \eqref{HuPeRe},
$$ \| M \|_{   L^{(p- \varepsilon_0)' }_{  w^{1- (p- \varepsilon_0)'}} (X) \to L^{ (p-\varepsilon_0)' }_{  w^{1- (p- \varepsilon_0)'}} (X)   }
 \leq
\overline{c} (p-\varepsilon_0) \Big[ w^{1- (p-\varepsilon_0)'} \Big]_{ A_{(p-\varepsilon_0)'}(X) }  ^{1 / ((p-\varepsilon_0)'-1)} $$
and
$$ \| M \|_{   L^{(p+ \varepsilon_0)' }_{  w^{1- (p+ \varepsilon_0)'}} (X) \to L^{ (p+\varepsilon_0)' }_{  w^{1+ (p- \varepsilon_0)'}} (X)   }
 \leq
\overline{c} (p+\varepsilon_0) \Big[ w^{1- (p+\varepsilon_0)'} \Big]_{ A_{(p+\varepsilon_0)'}(X) }  ^{1 / ((p+\varepsilon_0)'-1)} . $$
We can rewrite these estimates as follows:
$$ \| w^{-1}M f \|_{L^{(p- \varepsilon_0)'}_{w}(X)} \leq C_1(w, p, \varepsilon_0)
\| w^{-1} f \|_{L^{(p- \varepsilon_0)'}_{w}(X)} $$
and
$$ \| w^{-1}M f \|_{L^{(p+ \varepsilon_0)'}_{w}(X)} \leq C_2(w, p, \varepsilon_0)
\| w^{-1} f \|_{L^{(p+ \varepsilon_0)'}_{w}(X)}, $$
where
$$  C_1(w, p, \varepsilon_0)=  \overline{c} (p-\varepsilon_0)  \Big[  w^{1- (p-\varepsilon_0)'} \Big]_{  A_{ (p-\varepsilon_0)' }(X)}^{ 1/((p-\varepsilon_0)'-1)}$$
and
$$  C_2(w, p, \varepsilon_0)=  \overline{c} (p+\varepsilon_0)  \Big[  w^{1- (p-\varepsilon_0)'}
\Big]_{  A_{ (p+\varepsilon_0)' }(X)}^{ 1/((p+\varepsilon_0)'-1)}$$
with the constant $\overline{c}$ defined by \eqref{overlinesmall}.


By using Marcinkiewicz interpolation theorem for Lorentz spaces (see \cite{SS}, Ch. V) with respect to sublinear operator
$$  Tf = w^{-1} M f $$
we get

$$ \| w^{-1}M f \|_{L^{p', r}_{w}(X)} \leq C(w, p, \varepsilon) \| w^{-1}M  \|_{L^{p', r}_{w}(X)},  $$
where $1<r< \infty$ and
$$ C(w, p, \varepsilon) = C 2^{1/p'} \varepsilon_0^{-1}\bigg[ p' C_1(w, p, \varepsilon)
 + (p- \varepsilon_0)'  C_2(w, p, \varepsilon) \bigg].   $$

Here we used the fact that

$$  \frac{1}{p'} = \frac{1-t}{(p-\varepsilon)'}+ \frac{t}{(p+\varepsilon)'}, \;\;\; 0< t< 1.  $$

Taking $r= s'$ we get the desired result.
\end{proof}

\begin{theorem}\label{Theorem 3}[Diagonal Case] Let ${\mathcal{F}}$ be a family of pairs $(f,g)$ of measurable non-negative functions $f,g$ defined on
$X$.  Suppose that for  some  $1\leq  p_0 <\infty$,   for every $w\in A_{p_0}(X)$ and all $(f, g)\in {\mathcal{F}}$, the one-weight inequality
\begin{equation}\label{Rub+++}
\bigg( \!\! \int\limits_{X} \!\! g^{p_0}(x) w(x) \;d\mu(x)\bigg)^{\frac{1}{p_0}} \!\! \leq C  N \big( [w]_{ A_{p_0} } \big)
\bigg( \!\! \int\limits_{X}\!\!  f^{p_0}(x) w(x) \;d\mu (x)\bigg)^{\frac{1}{p_0}}
\end{equation}
holds with a positive non-decreasing function $N(\cdot)$ and some positive constant $C$ which does not depend on $(f,g)$ and $w$.
Then for any $1<p,s<\infty$,  for all $(f,g) \in {\mathcal{F}}$ and any $w\in A_p(X)$,
$$  \| g\|_{L^{p,s}} \leq 4 C K_1 \Big(   \| M\|_{ L^{ (p/q_0)'}},    p,s  \Big) \| f\|_{L^{p,s}}, $$
where the constant $C$ is the same as in \eqref{Rub+++} and
\begin{align*}
K_1(\| M\|_{  {\widetilde{L}}^{  (p/q_0)',  (s/q_0)'  }  _w},  p,s) =   \!
\begin{cases}\!
N  \Big( (2\overline{c} (q_0)')^{p_0-p}  \|M\|_{ {\widetilde{L}}^{  (p/q_0)',  (s/q_0)'  }_w} ^{((q_0)'-1)(p_0-(q_0)')}  \Big),
 q_0 <p_0, \\[0.2cm]
N\Big(  (2\overline{c} (q_0)')^{  \frac{p_0-q_0}{q_0p-1}}
 \|M\|_{ \widetilde{L}^{(p/q_0)',  (s/q_0)'}_w}^{  \frac{2p_0 +q_0 p_0+1}{(q_0-1)^2} }\Big),   q_0>p_0.
\end{cases}
\end{align*}
with non-decreasing $N$ and $q_0\in (1,p)$, and

\begin{equation*}\label{tilda}
{\widetilde{L}}^{  (p/q_0)',  (s/q_0)'  }_w   =
\bigg\{  f: X \mapsto {\mathbb{R}}:  \Big\| \frac{1}{w} f \Big\|_{ L^{(p/q_0)',  (s/q_0)' }_w} < \infty \bigg\}.
\end{equation*}
\end{theorem}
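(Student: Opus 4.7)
The proof will be a direct application of Theorem \ref{Theorem 1} (Extrapolation in Banach Function Spaces) to the concrete Banach function space $E = L^{p,s}_w(X)$. The key is to verify each hypothesis and then translate the abstract constant $K(\|M\|_{(E^{1/q_0})'}, q_0, p_0)$ into the explicit form $K_1$ appearing in the statement.

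First I would fix $q_0 \in (1, \min\{p,s\})$ arbitrarily (to be absorbed into the constant); this ensures that $p/q_0 > 1$ and $s/q_0 > 1$. By property (vi) of Lorentz spaces,
\[
E^{1/q_0} = \bigl(L^{p,s}_w\bigr)^{1/q_0} = L^{p/q_0,\, s/q_0}_w,
\]
which is a $BFS$ by property (i) since both exponents exceed $1$. Next, by Proposition \ref{dual-miu} applied to the pair $(p/q_0, s/q_0)$, the K\"othe dual of $E^{1/q_0}$ with respect to the underlying measure $\mu$ is equivalent in norm to
\[
\bigl(E^{1/q_0}\bigr)' \;\cong\; \widetilde{L}^{(p/q_0)',\, (s/q_0)'}_w,
\]
with norm $\|f\|_{\widetilde{L}} = \|w^{-1} f\|_{L^{(p/q_0)', (s/q_0)'}_w}$; the equivalence constants satisfy the property \eqref{const} of Remark \ref{mainremark} and so may be absorbed into the generic constant $C$.

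Second, I need to verify that $(E^{1/q_0})' \in \mathbb{M}(X)$ with a quantitative norm bound. Since $w \in A_p(X)$ and the underlying Lorentz exponents $(p/q_0)', (s/q_0)'$ lie in $(1,\infty)$, Proposition \ref{propo0} (applied with the pair $((p/q_0)', (s/q_0)')$ in the role of $(p', s')$, i.e.\ with respect to the weight $w$ regarded as an $A_p$ weight) gives a finite bound
\[
\|M f\|_{\widetilde{L}^{(p/q_0)', (s/q_0)'}_w} \leq C\, \|f\|_{\widetilde{L}^{(p/q_0)', (s/q_0)'}_w},
\]
so $M$ is bounded on $(E^{1/q_0})'$ as required, with norm $\|M\|_{\widetilde{L}^{(p/q_0)', (s/q_0)'}_w}$.

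Third, I would invoke Theorem \ref{Theorem 1} with this choice of $E$ and $q_0$. That theorem yields
\[
\|g\|_{L^{p,s}_w} \leq 4 C\, K\bigl(\|M\|_{(E^{1/q_0})'}, q_0, p_0\bigr) \|f\|_{L^{p,s}_w},
\]
where $K$ is the expression in \eqref{kle}. Substituting the identification $(E^{1/q_0})' = \widetilde{L}^{(p/q_0)', (s/q_0)'}_w$ from step one converts $K$ into exactly the quantity $K_1$ stated in the theorem.

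The main (minor) obstacle is verifying that the K\"othe dual identification in Proposition \ref{dual-miu} passes cleanly through the $1/q_0$-convexification — that is, that replacing $(p,s)$ by $(p/q_0, s/q_0)$ in that proposition is legitimate, and that the resulting equivalence constants enjoy the property \eqref{const} so that they may be swallowed into $C$. Once this is established the substitution is mechanical. No further estimate beyond Theorem \ref{Theorem 1}, Proposition \ref{dual-miu}, Proposition \ref{propo0}, and property (vi) of weighted Lorentz spaces is needed.
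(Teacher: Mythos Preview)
Your overall strategy---identify $E^{1/q_0}$ with $L^{p/q_0,s/q_0}_w$, compute its K\"othe dual via Proposition~\ref{dual-miu}, check that $M$ is bounded there, and invoke Theorem~\ref{Theorem 1}---is exactly the paper's route. However, there is a genuine gap in your verification that $(E^{1/q_0})'\in\mathbb{M}(X)$.

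You write that Proposition~\ref{propo0} can be ``applied with the pair $((p/q_0)',(s/q_0)')$ in the role of $(p',s')$, i.e.\ with respect to the weight $w$ regarded as an $A_p$ weight.'' But Proposition~\ref{propo0} is stated for $w\in A_p$ and yields boundedness on $\widetilde L^{p',s'}_w$; to obtain boundedness on $\widetilde L^{(p/q_0)',(s/q_0)'}_w$ you must apply it with $p$ replaced by $p/q_0$, and this requires $w\in A_{p/q_0}$, which is \emph{strictly stronger} than $w\in A_p$ since $p/q_0<p$. Your hypothesis gives only $w\in A_p$, so the invocation as written fails. Correspondingly, your choice ``$q_0\in(1,\min\{p,s\})$ arbitrarily'' cannot work: for $q_0$ too large, $w$ need not lie in $A_{p/q_0}$ at all.

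The paper closes this gap by first invoking the openness (self-improvement) property of Muckenhoupt classes: $w\in A_p$ implies $w\in A_{p-\varepsilon_0}$ for the explicit $\varepsilon_0$ in \eqref{varepsilon0}. One then \emph{chooses} $q_0$ so that $p-\varepsilon_0<p/q_0$ (equivalently $q_0<p/(p-\varepsilon_0)$), whence monotonicity \eqref{monotone} gives $w\in A_{p/q_0}$ and Proposition~\ref{propo0} applies legitimately. Once you insert this step, the rest of your argument goes through unchanged.
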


\begin{proof}
Let $1<p< \infty$ and let $w\in A_p(X)$.  Then $w\in A_{p-\varepsilon_0}(X)$ with $\varepsilon_0$ equal to the expression given
by \eqref{varepsilon0}. Take $q_0$ so that $p-\varepsilon_0 < p/q_0$. Then by monotonicity property of Muckenhoupt classes, $w\in A_{p/q_0}$. Due to
Proposition \ref{propo0} we find that
\begin{align*}
&\|w^{-1} M  f\|_{L^{(p/q_0)', (s/q_0)'}_w}  \\&
\leq C 2^{1/(p/q_0)'} (\varepsilon_0)^{-1} \Big[  \big(\frac{p}{q_0}\big)'
[w]_{A_{\frac{p}{q_0}- \varepsilon_0}} +
\big(\frac{p}{q_0} -\varepsilon\big)'
[w]_{A_{\frac{p}{q_0} +\varepsilon_0}} \Big] \|w^{-1}  f\|_{L^{(p/q_0)', (s/q_0)'}_w} .
\end{align*}
Now the result follows from Theorem \ref{Theorem 1}.
\end{proof}

\begin{remark} \label{main1}
Taking  the proof of Theorem 4.1 into account we find that
$$ \|  M  \|_{{\widetilde{L}}^{  (p/q_0)',  (s/q_0)'  }_w} \leq C(p, q_0, \varepsilon_0,
[w]_{A_{\frac{p}{q_0} +\varepsilon_0}}, [w]_{A_{\frac{p}{q_0} +\varepsilon_0}}),  $$
where
$$ \sup_{ 0< \varepsilon< \delta_0 }  C \big(  p-\varepsilon, q_0,  \varepsilon_0,   [w]_{  A_{ \frac{p-\varepsilon}{q_0} +\varepsilon_0}  },
[w]_{ A_{ \frac{p-\varepsilon}{q_0} +\varepsilon_0} } \big)< \infty $$
for some small positive number $\delta_0$.
\end{remark}

\begin{theorem}\label{Theorem 4}[Off-diagonal Case] Let ${\mathcal{F}}$ be a family of pairs $(f,g)$
of measurable non-negative functions $f,g$ on $X$.  Suppose that for  some  $1<  p_0, q_0<\infty$ and for every $w\in A_{1+ q_0/ (p_0)'}(X)$ and
$(f, g)\in {\mathcal{F}}$, the one-weight inequality

\begin{equation*}
\bigg( \! \int\limits_{X} \!\! g^{q_0}(x) w(x) \;d\mu(x)\bigg)^{ \frac{1}{q_0} } \!\! \leq C
N \big([w]_{A_{1+ q_0/ (p_0)'}(X)}\big)  \bigg(\! \int\limits_{X} \!  f^{p_0}(x) w^{p_0/q_0}(x) \;d\mu(x)\bigg)^{\frac{1}{p_0}}
\end{equation*}
holds with a positive constant $C$ independent of $(f,g)$ and $w$,  and some non-decreasing positive function $N(\cdot)$. Suppose that $1<p,q,r,s< \infty$ are
chosen so that

\begin{equation*}\label{difference}
\frac{1}{p}- \frac{1}{q}= \frac{1}{s}- \frac{1}{r}= \frac{1}{p_0}- \frac{1}{q_0}.
\end{equation*}
Then for all $w\in A_{1+p/q'}$ and all $(f,g) \in {\mathcal{F}}$ we get

$$ \| g\|_{L^{q,r}_w(X)} \leq\overline{K}(\|M\|, p, q, r,s) \|w^{\frac{1}{q}- \frac{1}{p}} f\|_{L^{p,s}_{w}(X)}, $$
where

\begin{equation*}\label{over+}
\overline{K}(\|M\|, p, q, r,s ) =C \begin{cases}
        \displaystyle N \bigg[ \Big( 2\overline{c}  \Big(1+\frac{\widetilde{q}_0}{(\widetilde{p}_0)'} \Big)
        \Big)^{\gamma(\widetilde{q}_0-q_0)} \| M\|_{ \big( [L^{q,r}_w]^{ 1/q_0 } \big)'}
        ^{1+\frac{\gamma \widetilde{q}_0(q_0-\widetilde{q}_0)}{(\widetilde{p}_0)'}} \bigg],  \widetilde{q}_0<q_0, \\[0.3cm]
    N\bigg[ \Big( 2\overline{c} \Big( 1+\frac{\widetilde{q}_0}{(\widetilde{p}_0)'} \Big) \Big)^{
    \frac{\gamma(\widetilde{q}_0-q_0)}{\gamma q-1} }
    \| M\|_{\big( [L^{q,r}_w]^{ 1/q_0 } \big)'} \bigg],  \widetilde{q}_0>q_0,
    \end{cases}
\end{equation*}
with $\gamma$ defined by \eqref{gamma} and $\widetilde{q}_0$ is defined so that
\begin{equation}\label{leq}
1< \widetilde{q}_0 < \frac{qp'}{p'+q- \varepsilon_0 p'}
\end{equation}
with $\varepsilon_0$ defined by \eqref{varepsilon0}.
\end{theorem}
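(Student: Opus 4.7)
The plan is to mirror the argument in Theorem \ref{Theorem 3} by reducing to the abstract off-diagonal Banach function space extrapolation provided by Theorem \ref{Theorem 2}. I take the target BFS to be $\overline{E}=L^{q,r}_w(X)$ and the source BFS $E$ to be defined by $\|h\|_E=\|w^{1/q-1/p}h\|_{L^{p,s}_w(X)}$; this choice is forced precisely so that the right-hand side produced by Theorem \ref{Theorem 2} matches the right-hand side in the desired conclusion. Given an admissible $\widetilde{q}_0$ from \eqref{leq}, I set $\widetilde{p}_0$ via $\frac{1}{\widetilde{p}_0}-\frac{1}{\widetilde{q}_0}=\frac{1}{p_0}-\frac{1}{q_0}$, which is \eqref{p0}; together with the scaling relation $\frac{1}{p}-\frac{1}{q}=\frac{1}{s}-\frac{1}{r}$ in the hypotheses, this relates the four Lorentz exponents in the expected way.

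\textbf{Convexification and duality.} Using property (vi) of Lorentz spaces, $\overline{E}^{1/\widetilde{q}_0}=L^{q/\widetilde{q}_0,\,r/\widetilde{q}_0}_w(X)$, which is a BFS provided $q/\widetilde{q}_0>1$ and $r/\widetilde{q}_0>1$; the upper bound in \eqref{leq} combined with $\frac{1}{s}-\frac{1}{r}=\frac{1}{p}-\frac{1}{q}$ secures both. Next, the substitution $h\mapsto w^{(1/q-1/p)/\widetilde{p}_0}h$ together with property (vi) identifies $E^{1/\widetilde{p}_0}$ with a weighted Lorentz space of the form $L^{p\widetilde{p}_0,\,s\widetilde{p}_0}_w(X)$ multiplied by a power of $w$, establishing condition \eqref{EX}. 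To check \eqref{EXX} I invoke Proposition \ref{dual-miu} to compute both K\"othe duals with respect to $\mu$ (not $wd\mu$); both sides come out as weighted Lorentz norms, and the exponent relation $\frac{1}{\widetilde{p}_0}-\frac{1}{\widetilde{q}_0}=\frac{1}{p_0}-\frac{1}{q_0}$ together with property (vi) then yields the required identification.

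\textbf{Maximal function bound on the dual.} To confirm $(\overline{E}^{1/\widetilde{q}_0})'\in\mathbb{M}(X)$ with a quantitative bound, I apply Proposition \ref{propo0} to the space $L^{q/\widetilde{q}_0,\,r/\widetilde{q}_0}_w(X)$: the K\"othe dual of this Lorentz space is of the form $\widetilde{L}^{(q/\widetilde{q}_0)',\,(r/\widetilde{q}_0)'}_w$, exactly the environment in which Proposition \ref{propo0} delivers a weighted Lorentz bound for $M$. The hypothesis $w\in A_{1+p/q'}(X)$ combined with the upper bound on $\widetilde{q}_0$ in \eqref{leq} (together with the openness property $w\in A_{p-\varepsilon_0}$ used in the proof of Proposition \ref{propo0}) places $w$ in the required Muckenhoupt class, giving the quantitative control on $\|M\|_{(\overline{E}^{1/\widetilde{q}_0})'}$ that feeds into the $\overline{K}$ expression.

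\textbf{Conclusion and main obstacle.} With all hypotheses of Theorem \ref{Theorem 2} verified, that theorem yields $\|g\|_{L^{q,r}_w}\leq 4C(\overline{K}(\|M\|,\widetilde{p}_0,\widetilde{q}_0,p_0,q_0))^{\widetilde{q}_0}\|w^{1/q-1/p}f\|_{L^{p,s}_w}$, and absorbing the $\widetilde{q}_0$ exponent into $\overline{K}$ produces the stated estimate. The main technical obstacle is the precise verification of the duality identity \eqref{EXX} at the weighted Lorentz level, since it requires reconciling the K\"othe dual with respect to $\mu$ (Proposition \ref{dual-miu}) with the $\widetilde{p}_0/\widetilde{q}_0$-convexification on the right; a secondary delicate point is confirming that the range \eqref{leq} is exactly the range in which Proposition \ref{propo0} applies, propagating the quantitative $A_{1+q_0/(p_0)'}$-dependence into the final expression for $\overline{K}$.
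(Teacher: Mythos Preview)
Your proposal is correct and follows essentially the same route as the paper: you set $\overline{E}=L^{q,r}_w$, $E=\{h:\|w^{1/q-1/p}h\|_{L^{p,s}_w}<\infty\}$, choose $\widetilde{q}_0$ via \eqref{leq} and $\widetilde{p}_0$ via \eqref{p0}, verify \eqref{EXX} through the exponent identities $\frac{\widetilde{p}_0}{\widetilde{q}_0}\big(\frac{p}{\widetilde{p}_0}\big)'=\big(\frac{q}{\widetilde{q}_0}\big)'$ and $\frac{\widetilde{p}_0}{\widetilde{q}_0}\big(\frac{s}{\widetilde{p}_0}\big)'=\big(\frac{r}{\widetilde{q}_0}\big)'$, use Proposition~\ref{propo0} (together with the openness $w\in A_{1+q/p'-\varepsilon_0}$) to get $(\overline{E}^{1/\widetilde{q}_0})'\in\mathbb{M}(X)$, and conclude by Theorem~\ref{Theorem 2}. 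One small slip: in your convexification paragraph the exponents should be $p/\widetilde{p}_0,\,s/\widetilde{p}_0$ rather than $p\widetilde{p}_0,\,s\widetilde{p}_0$, but this does not affect the argument.
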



\begin{proof}
Let  $1<p,r,q,s< \infty$ are chosen so that  the conditions of the theorem are fulfilled.  Suppose that  $w\in A_{1+p/q'}(X)$.
Then the openness property of Muckenhoupt classes yields  that
$w\in A_{1+p/q'- \varepsilon_0}(X)$, where  $\varepsilon_0$ is defined by \eqref{varepsilon0} but replaces $p$ by $1+ q/p'$.

Choose $\widetilde{p}_0$  and $\widetilde{q}_0$ so that
$$ \frac{1}{p_0}- \frac{1}{q_0}= \frac{1}{\widetilde{p}_0}- \frac{1}{\widetilde{q}_0}  $$
and that \eqref{leq}  holds. In this case, $w\in A_{q/\widetilde{q}_0}$ and $\widetilde{q}_0 < q$. Hence by Proposition \ref{propo0} we find that
$$  \| w^{-1} Mf \|_{L^{(q/q_0)',(r/q_0)'}_w} \leq C (q, r, q_0, [w]_{A_{q/ \widetilde{q}_0 - \varepsilon_0}}, [w]_{A_{q/ \widetilde{q}_0 + \varepsilon_0}})  \| w^{-1} f\|_{L^{(q/q_0)',(r/q_0)'}_w} $$
with
$$   \sup_{0< \varepsilon< \delta_0}  C (q-\varepsilon, r, q_0, [w]_{A_{(q-\varepsilon)/ \widetilde{q}_0 - \varepsilon_0}}, [w]_{A_{(q-\varepsilon)/ \widetilde{q}_0 + \varepsilon_0}})< \infty. $$

Let $\overline{E}= L^{q,r}_w$ and
$$ E= \bigg\{  f: \bigg\|  w^{ \frac{1}{q}- \frac{1}{p} } f \bigg\|_{ L^{r,s}_w } < \infty  \bigg\}. $$


Observe now that
$$ \frac{p_0}{\widetilde{q}_0}\Big( \frac{p}{\widetilde{p}_0}\Big)' =
\Big( \frac{q}{\widetilde{q}_0} \Big)'; \;\; \frac{p_0}{\widetilde{q}_0}\Big( \frac{s}{\widetilde{p}_0}\Big)' =
\Big( \frac{r}{\widetilde{q}_0}\Big)' $$
which, on the other hand, implies that

$$\bigg\| w^{\frac{1}{q}- \frac{1}{p}} f\bigg\|_{ \widetilde{L}^{ p,s, \widetilde{q}_0, \widetilde{p}_0 } _w} =
\bigg\| w^{-1} f\bigg\|_{\overline{L}^{q,r, \widetilde{q}_0} _w},$$

where
$$  \widetilde{L}^{p,s, \widetilde{q}_0, \widetilde{p}_0 }_w = \bigg[ \Big( \Big( L^{p,s}_w\Big)^{1/ \widetilde{p}_0}
\Big)'\bigg]^{\widetilde{p}_0
/ \widetilde{q}_0} ;\;\; \overline{L}^{q,r, \widetilde{q}_0} _w = \bigg[ \Big( L^{q,r}_w \Big)^{ 1/ \widetilde{q}_0 } \bigg]'.  $$

Now the result follows from Theorem  \ref{Theorem 2}.
\end{proof}




\begin{remark} \label{main2}
The proof of Theorem 4.2 yields that
$$ \| M f \|_{ \overline{L}^{q,r, \widetilde{q}_0} _w} \leq
C (q, q_0, \widetilde{q}_0, \varepsilon_0,   w ),  $$
where
$$ \sup_{ 0< \varepsilon< \delta_0 }  C \big(  q - \varepsilon, q_0,  \varepsilon_0, \widetilde{q}_0,   w  \big)< \infty $$
for some small positive number $\delta_0$.
\end{remark}

\section{Extrapolation in Grand Lorentz Spaces}

Applying statements proven in Section \ref{Lorentz} we have the following results regarding grand Lorentz spaces:

\begin{theorem}\label{Theorem 5}[Diagonal Case] Let $w$ be integrable weight on $X$ and let ${\mathcal{F}}$ be a family of pairs $(f,g)$ of measurable non-negative functions $f,g$ defined on
$X$.  Suppose that for  some  $1\leq  p_0 <\infty$,   for every $w\in A_{p_0}(X)$ and all $(f, g)\in {\mathcal{F}}$, the one-weight inequality holds
\begin{equation*}\label{Rub+}
\bigg( \!\! \int\limits_{X} \!\! g^{p_0}(x) w(x) \;d\mu(x)\bigg)^{\frac{1}{p_0}} \!\! \leq C  N \big( [w]_{ A_{p_0} } \big)
\bigg( \!\! \int\limits_{X}\!\!  f^{p_0}(x) w(x) \;d\mu (x)\bigg)^{\frac{1}{p_0}}
\end{equation*}
with some positive constant $C$ which does not depend on $(f,g)$ and $w$, and positive non-decreasing function $N(\cdot)$.
Then for any $1<p<\infty$, $1\leq s < \infty$,  $\theta>0$, $w\in A_p(X)$ and  for all $(f,g) \in {\mathcal{F}}$,
$$  \| g\|_{L^{p),s, \theta}_w} \leq C  \| f\|_{L^{p),s, \theta}}, $$
with the positive constant $C$ independent of $(f,g)$.
\end{theorem}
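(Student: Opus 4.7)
The plan is to reduce the estimate to Theorem \ref{Theorem 3} applied uniformly to the family of exponents appearing in the defining supremum of the grand Lorentz norm. Fix $w\in A_p(X)$; by the openness (reverse H\"older) property of the Muckenhoupt classes, there exists $\varepsilon_*=\varepsilon_*(w,p)\in(0,p-1)$ such that $w\in A_{p-\varepsilon}(X)$ for every $\varepsilon\in[0,\varepsilon_*]$, with $[w]_{A_{p-\varepsilon}}$ uniformly bounded on this range.

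For each such $\varepsilon$, I would choose some $q_0\in(1,p-\varepsilon)$ and apply Theorem \ref{Theorem 3} to obtain
$$\|g\|_{L^{p-\varepsilon,s}_w}\le 4C\,K_1\bigl(\|M\|_{\widetilde{L}^{((p-\varepsilon)/q_0)',(s/q_0)'}_w},\,p-\varepsilon,\,s\bigr)\,\|f\|_{L^{p-\varepsilon,s}_w}.$$
By Remark \ref{main1}, the factor $K_1$ is uniformly bounded as $\varepsilon$ ranges over $(0,\varepsilon_*)$; multiplying by $\varepsilon^{\theta/(p-\varepsilon)}$ and taking the supremum over this range therefore yields
$$\sup_{0<\varepsilon<\varepsilon_*}\varepsilon^{\theta/(p-\varepsilon)}\|g\|_{L^{p-\varepsilon,s}_w}\le C\,\|f\|_{L^{p),s,\theta}_w}.$$

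To control the remaining range $\varepsilon\in[\varepsilon_*,p-1)$, on which $w$ need not lie in $A_{p-\varepsilon}(X)$, I would exploit the integrability of $w$: since $(X,w\,d\mu)$ is a finite measure space, H\"older's inequality in the Lorentz scale (property (v)) with $\chi_X$ as the second factor yields the embedding $L^{p-\varepsilon_*,s}_w\hookrightarrow L^{p-\varepsilon,s}_w$ with embedding constant of the form $C\cdot w(X)^{(\varepsilon-\varepsilon_*)/((p-\varepsilon)(p-\varepsilon_*))}$, which stays bounded as $\varepsilon$ runs over $[\varepsilon_*,p-1)$ by a quantity depending only on $w(X)$, $p$, $s$, and $\varepsilon_*$. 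Since moreover $\varepsilon^{\theta/(p-\varepsilon)}\le(p-1)^\theta$, the tail of the supremum is controlled by $\|g\|_{L^{p-\varepsilon_*,s}_w}$, which by the first step is itself bounded above by a constant multiple of $\|f\|_{L^{p),s,\theta}_w}$. Combining the two pieces gives the theorem.

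The main obstacle is precisely this tail range: direct extrapolation is unavailable there because $w$ generally lies outside $A_{p-\varepsilon}(X)$ for $\varepsilon$ away from $0$, so the integrability of $w$ (which holds whenever $X$ is bounded, as in the grand Lorentz setting) must be used to reduce the tail back to the first step via the finite-measure Lorentz embedding. Verifying the uniform bound from Remark \ref{main1} for the range $(0,\varepsilon_*)$ is essentially routine once one has fixed a single admissible $q_0$, so the delicate point really is the gluing argument between the two regimes.
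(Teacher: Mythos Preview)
Your proposal is correct and follows essentially the same approach as the paper: the paper also reduces, via H\"older's inequality and the integrability of $w$, to proving the inequality $\varepsilon^{\theta/(p-\varepsilon)}\|g\|_{L^{p-\varepsilon,s}_w}\le C(w,p,s,\varepsilon)\,\varepsilon^{\theta/(p-\varepsilon)}\|f\|_{L^{p-\varepsilon,s}_w}$ only on a small range $(0,\sigma_0)$ on which $w\in A_{p-\varepsilon}$, and then invokes Theorem~\ref{Theorem 3} together with Remark~\ref{main1} for the uniform bound. Your write-up simply makes explicit the tail-handling (the finite-measure Lorentz embedding) that the paper compresses into the single phrase ``By H\"older's inequality and the fact that $w$ is integrable on $X$ it is enough to prove\dots''.
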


\begin{proof}
Let $w\in A_p$. By H\"older's inequality and the fact that $w$ is integrable on $X$  it is enough to prove that

$$ \sup_{0< \varepsilon< \sigma_0} \varepsilon^{  \frac{\theta}{p-\varepsilon}  }
 \| g \|_{  L^{p-\varepsilon, s}_w  } \leq C \sup_{0< \varepsilon< \sigma_0} \varepsilon^{  \frac{\theta}{p-\varepsilon}  }
 \|  f \|_{  L^{p-\varepsilon, s}_w  }  $$
for all $(f,g) \in {\mathcal{F}}$ and for some positive constant $\sigma_0$.

Observe that Theorem \ref{Theorem 3} and Remark \ref{main1}
yield that
$$ \varepsilon^{\frac{\theta}{p-\varepsilon}}\| g \|_{  L^{p-\varepsilon, s}_w  } \leq C(w, p, s, \varepsilon) \varepsilon^{  \frac{\theta}{p-\varepsilon}  }
\|  f \|_{  L^{p-\varepsilon, s}_w  }  $$
for all $(f,g) \in {\mathcal{F}}$ and all $w\in A_{p-\varepsilon}$ with $0< \varepsilon< \sigma_0$, where
$$ \sup_{0< \varepsilon< \sigma_0} C(w, p, s, \varepsilon) < \infty. $$
\end{proof}


\begin{theorem}\label{Theorem 6}[Off-diagonal Case] Let $w$ be an integrable weight on $X$.  Let    ${\mathcal{F}}$ be a family of pairs $(f,g)$
of measurable non-negative functions $f,g$ on $X$.  Suppose that for  some  $1< p_0 \leq q_0<\infty$ and for every
$w\in A_{1+ q_0/ (p_0)'}(X)$ and $(f, g)\in {\mathcal{F}}$, the one-weight inequality holds

\begin{equation*}
\bigg( \!\! \int\limits_{X} \!\! g^{q_0}(x) w(x) \;d\mu(x)\bigg)^{ \frac{1}{q_0} } \!\! \leq C
N \big([w]_{A_{1+ q_0/ (p_0)'}(X)}\big)  \bigg( \!\! \int\limits_{X}\!\!  f^{p_0}(x) w^{p_0/q_0}(x) \;d\mu(x)\bigg)^{\frac{1}{p_0}}
\end{equation*}
with a positive constant $C$ independent of $(f,g)$ and $w$,  and some non-decreasing positive function $N(\cdot)$. Suppose that $1<p,q,r,s< \infty$ are
chosen so that
\begin{equation*}\label{difference}
\frac{1}{p}- \frac{1}{q}= \frac{1}{p_0}- \frac{1}{q_0};\;\;  \frac{1}{s}- \frac{1}{r}= \frac{1}{p_0}- \frac{1}{q_0}.
\end{equation*}
Then for all $w\in A_{1+p/q'}$ and all $(f,g) \in {\mathcal{F}}$, we have

$$ \| g\|_{L^{q),r, q \theta/p}_w(X)} \leq C \|f\|_{L^{p),s, \theta}_{w}(X)}, $$
where the positive constant $C$ is independent of $(f,g)$.
\end{theorem}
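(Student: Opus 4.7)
My plan is to mimic the argument used to derive Theorem~\ref{Theorem 5} from Theorem~\ref{Theorem 3}, but with the off-diagonal input Theorem~\ref{Theorem 4} replacing the diagonal one. As in the proof of Theorem~\ref{Theorem 5}, integrability of $w$ together with the Lorentz embedding property (iii) and H\"older's inequality reduce the problem to bounding
\[
\sup_{0<\varepsilon<\sigma_{0}}\varepsilon^{(q\theta/p)/(q-\varepsilon)}\,\|g\|_{L^{q-\varepsilon,r}_{w}}
\]
by the analogous grand-Lorentz supremum for $f$, for some sufficiently small $\sigma_{0}>0$. Thus the task is to produce, for each small $\varepsilon$, a pointwise bound on $\|g\|_{L^{q-\varepsilon,r}_{w}}$ with constants behaving correctly as $\varepsilon\to 0^{+}$.

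The first key step is the off-diagonal index-matching. For each $\varepsilon\in(0,\sigma_{0})$ I would define $\tilde\varepsilon=\tilde\varepsilon(\varepsilon)$ by
\[
\frac{1}{p-\tilde\varepsilon}-\frac{1}{q-\varepsilon}=\frac{1}{p_{0}}-\frac{1}{q_{0}}=\frac{1}{s}-\frac{1}{r},
\]
which is exactly the hypothesis needed to apply Theorem~\ref{Theorem 4} to the parameters $(p-\tilde\varepsilon,\,q-\varepsilon,\,r,\,s)$. An elementary computation gives $\tilde\varepsilon(0)=0$ and $\tilde\varepsilon'(0)=p^{2}/q^{2}$, so $\tilde\varepsilon$ is smooth, positive and increasing on $(0,\sigma_{0})$ once $\sigma_{0}$ is small. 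Crucially, the corresponding weight exponent
$\frac{1}{q-\varepsilon}-\frac{1}{p-\tilde\varepsilon}=\frac{1}{q}-\frac{1}{p}$
is \emph{independent} of $\varepsilon$. Openness of Muckenhoupt classes ensures $w\in A_{1+(p-\tilde\varepsilon)/(q-\varepsilon)'}$ throughout this range.

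Applying Theorem~\ref{Theorem 4} to these perturbed parameters then yields
\[
\|g\|_{L^{q-\varepsilon,r}_{w}}\le \overline{K}\!\left(\|M\|,p-\tilde\varepsilon,q-\varepsilon,r,s\right)\,\big\|w^{1/q-1/p}f\big\|_{L^{p-\tilde\varepsilon,s}_{w}},
\]
with $\overline{K}$ bounded uniformly in $\varepsilon\in(0,\sigma_{0})$ by Remark~\ref{main2}. Multiplying by the grand-Lorentz weight $\varepsilon^{(q\theta/p)/(q-\varepsilon)}$ and taking the supremum in $\varepsilon$ (which on the right majorises a supremum in $\tilde\varepsilon$, since $\tilde\varepsilon$ is an increasing bijection near $0$) then gives the conclusion, with the weight factor $w^{1/q-1/p}$ on the $f$-side inherited from Theorem~\ref{Theorem 4}.

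The main obstacle I anticipate is the comparison of the two grand-Lorentz weights $\varepsilon^{(q\theta/p)/(q-\varepsilon)}$ and $\tilde\varepsilon^{\theta/(p-\tilde\varepsilon)}$. Writing the exponents as $\alpha(\varepsilon)$ and $\beta(\varepsilon)$ respectively, both limit to $\theta/p$ as $\varepsilon\to 0^{+}$, and a short calculation gives $\alpha(\varepsilon)-\beta(\varepsilon)=\tfrac{\theta(q-p)\varepsilon}{pq(q-\varepsilon)}=O(\varepsilon)$ while $\varepsilon/\tilde\varepsilon\to q^{2}/p^{2}$. Since $(\alpha-\beta)\log\varepsilon\to 0$, the ratio $\varepsilon^{\alpha(\varepsilon)}/\tilde\varepsilon^{\beta(\varepsilon)}$ stays uniformly bounded on $(0,\sigma_{0})$; this precise uniform control, together with the uniformity of $\overline{K}$ from Remark~\ref{main2}, is exactly what is needed to pass from the pointwise-in-$\varepsilon$ estimate to the grand-norm bound.
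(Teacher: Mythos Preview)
Your proposal is correct and follows essentially the same route as the paper: reduce via H\"older and integrability of $w$ to a supremum over small $\varepsilon$, couple $\varepsilon$ and $\tilde\varepsilon$ (the paper's $\eta$) through $\frac{1}{p-\tilde\varepsilon}-\frac{1}{q-\varepsilon}=\frac{1}{p_0}-\frac{1}{q_0}$, apply Theorem~\ref{Theorem 4} with Remark~\ref{main2} for the uniform constant, and compare the two power weights. The only cosmetic difference is that the paper packages the last comparison by introducing an auxiliary function $\Psi(\varepsilon)\approx\varepsilon^{q\theta/p}$ designed so that $\Psi(\varepsilon)^{1/(q-\varepsilon)}$ matches $\eta^{\theta/(p-\eta)}$ exactly, whereas you verify the equivalence $\varepsilon^{(q\theta/p)/(q-\varepsilon)}\approx\tilde\varepsilon^{\theta/(p-\tilde\varepsilon)}$ directly via the asymptotics $\tilde\varepsilon/\varepsilon\to p^2/q^2$ and $(\alpha-\beta)\log\varepsilon\to 0$; your observation that the factor $w^{1/q-1/p}$ is inherited on the $f$-side is also consistent with how the paper actually uses the theorem.
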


\begin{proof}
Since $X$ is bounded, by H\"older's inequality  we have that
$$  \| g\|_{L^{q),r, q \theta/p}_w(X)} \leq C \sup_{0< \varepsilon< \varepsilon_0} \varepsilon^{   \frac{ \theta q } {p(q-\varepsilon)}  }
 \| g \|_{  L^{q-\varepsilon, r}_w  }. $$

 Let us set:

 \begin{equation*}\label{Psi(x)}
    \Psi(x):= \Phi(x^{\theta}),\;\;\;\Phi(x):=\Big[\frac{x-q}{1- A(x-q)}+p\Big]^{1-(x-q)A}
\end{equation*}

with a number  $A$ defined by
$$  A:= \frac{1}{p_0}- \frac{1}{q_0}= \frac{1}{p}- \frac{1}{q}=\frac{1}{s}- \frac{1}{r}.      $$

It is easy to check that
$$    \Psi(x)\approx x^{q\theta/p},\;\;\;x\to 0.$$

Hence,  it suffices to show that

$$ \sup_{0< \varepsilon< \varepsilon_0} \Psi(\varepsilon)^{   \frac{1} {q-\varepsilon}  }
 \| g \|_{  L^{q-\varepsilon, r}_w  } \leq C \sup_{0< \eta< \eta_0} \eta^{  \frac{\theta}{p-\eta}  }
 \|  f \|_{  L^{p-\eta, s}_w  }  $$
for all $(f,g) \in {\mathcal{F}}$ and for some positive constant $\varepsilon_0$,
where  $\varepsilon_0\in (0, q-1)$,

Here $\eta$ and $\varepsilon$ satisfy the condition:
$$
\frac{1}{p-\eta}- \frac{1}{q-\varepsilon}=A,
$$
and  $\eta_0$ is chosen so that if $\varepsilon \in (0, \varepsilon_0)$, then $\eta\in (0, \eta_0)$.

Theorem \ref{Theorem 4} and Remark \ref{main2} yield

\begin{align*}
   &\sup\limits_{0<\varepsilon<\varepsilon_0} \Psi(\varepsilon)^{\frac{1}{q-\varepsilon}}\| g\|_{L^{q-\varepsilon, r}_w(X)}   \leq C\sup\limits_{0<\eta<\eta_0} C( w,p-\eta, s, q-\varepsilon, r) \eta^{\frac{\theta}{p-\eta}}\|f\|_{L^{p-\eta, s}_w(X)}  \\ & \leq \overline{C} \|f\|_{\mathcal{L}^{p),s, \theta}_w(X)}.
\end{align*}
Observe that here $0< \varepsilon< \varepsilon_0$ if and only if $0<\eta<\eta_0$. Finally, since  $\Phi(\varepsilon) \approx \varepsilon^{q/p}$ we have the desired result.
\end{proof}

\section{Applications of Extrapolation Results in \\Grand Lorentz Spaces}

Based on extrapolation results we get the  boundedness of integral operators of Harmonic Analysis in grand Lorentz spaces.
In this section  we will assume that $X$ is bounded. We  denote by ${\mathcal{D}}(X)$ the class of bounded functions on $X$.

\subsection{Maximal, fractional and singular integral operators}

Let $K$ be the Calder\'on-Zygmund operator defined on an SHT, i.e., $K$ satisfies the following conditions (see, e.g., \cite{Ai}, \cite{CoWe}):

(i) $K$ is linear and bounded in $L^p(X)$ for every $p\in (1, \infty)$;

(ii) there is a measurable function $k: X\times X \mapsto {\mathbb{R}}$ such that for every $f\in D(X)$,
$$  Kf(x) = \int\limits_X k(x,y) f(y) d\mu(y), $$
for a.e. $x \notin \; supp \; f$, where $D(X)$ is the class of bounded functions with compact supports defined on $X$.

(iii) the kernels $k$ and $k^*$ (here $k^*(x,y):= k(y,x)$) satisfy the following pointwise H\"ormander's condition:  there are  positive constants $C$, $\beta$ and $A >1$ such that
$$  |k(x_0,y)- k(x,y)| \leq C \frac{d(x_0, x)^{\beta}}{ \mu(B (x_0, 2d(x_0, y))) d(x_0,y)^{\beta}} $$
holds for every $x_0 \in X$, $r>0$, $x\in B(x_0,r)$, $y\in X\setminus B(x_0,Ay)$;

(iv) there is a positive constant $C$ such that  for all $x,y\in X$,
$$ |k(x,y)|\leq \frac{C}{\mu( B( x, 2 d(x,y))}. $$

The operator $K$ (see, e.g.,  \cite{PrSa}, \cite{EKM} and references therein) is bounded in $L^{p_0}_w(X)$ for $1< p_0< \infty$ and $w\in A_{p_0}(X)$. Moreover,
the following estimate holds:
\begin{equation*}\label{est-ca}
\|Kf\|_{L^{p_0}_w(X)} \leq C_0([w]_{A_{p_0}(X)}) \leq \|f\|_{L^{p_0}_w(X)},\;\; f\in {\mathcal{D}}(X),
\end{equation*}
where $C_0([w]_{A_{p_0}(X)})$ is a constant depending on  $[w]_{A_{p_0}(X)}$ such that the mapping $x\mapsto C_0(x)$ is non-decreasing.



In the next statement by the symbol $I_{\alpha}$ will be denoted the fractional integral operator defined by

\begin{equation*}\label{Ialpha}
I_{\alpha}f(x)=\int_X K_{\alpha}(x,y) f(y)\,d\mu(y), \;\; x\in X,
\end{equation*}
where
$$  K_{\alpha}(x,y)=\begin{cases}
                \mu(B_{xy})^{\alpha-1}, & x\neq y, \\
                \mu\{x\}, & x=y, \;\; \mu\{x\}>0,
                    \end{cases}     $$
$0<\alpha<1$,  $B_{xy}:=B(x,d(x,y))$.

It is known that (see \cite{KoMANA} and   \cite{GeKo}) the following inequality holds:

$$ \| I_{\alpha} (f w^{\alpha})\|_{L^{q,s}_w(X)} \leq C   \| f\|_{L^{p,s}_w(X)}, \;\;\; f\in L^{p,s}_w,  $$
where $1<p< \frac{1}{\alpha}$, $q= \frac{p}{1-\alpha p}$, $1<s<\infty$ and $w\in A_{1+ q/p'}$.

Together with $I_{\alpha}$ we are interested in the related fractional maximal operator
$$   M_{\alpha}f(x) = \sup_{B \ni x} \frac{1}{\mu(B)^{1-\alpha}} \int\limits_B |f(x)| d\mu(y), \;\; 0< \alpha<1. $$

The following pointwise estimate holds for $f\geq 0$
\begin{equation}\label{pointwise++}
M_{\alpha}f(x) \leq C_{\alpha} I_{\alpha}f(x),
\end{equation}
where $C_{\alpha}$ is a positive constant independent of $f$ and $x$.

To prove the statements of this subsection we need the following lemma:

\begin{lemma}\label{Mn}  Let $1< p,s< \infty$ and let $\theta>0$. Then there is a positive constant $C$ such that for all balls $B$ and all $f\in L^p_w(B)$,
\begin{equation*}\label{B}
\| f\|_{L^{p), s, \theta} (B)} \leq   C w(B)^{-1/p} \| f\|_{ L^p_w(B) } \| \chi_B \|_{L^{p),  \theta}(B)}.
\end{equation*}
\end{lemma}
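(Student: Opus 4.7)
The plan is to reduce the claim to a uniform-in-$\varepsilon$ pointwise bound of the form
\[
\| f\|_{L^{p-\varepsilon, s}_w(B)} \leq C \| f\|_{L^p_w(B)}\, w(B)^{\alpha_\varepsilon}, \qquad \alpha_\varepsilon := \frac{1}{p-\varepsilon}-\frac{1}{p},
\]
with $C$ independent of $\varepsilon \in (0, p-1)$. Granted this estimate, I multiply both sides by $\varepsilon^{\theta/(p-\varepsilon)}$, rewrite $w(B)^{\alpha_\varepsilon} = w(B)^{-1/p}\, w(B)^{1/(p-\varepsilon)} = w(B)^{-1/p}\|\chi_B\|_{L^{p-\varepsilon}_w(B)}$, and take the supremum in $\varepsilon$ on both sides. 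This produces exactly the assertion of the lemma.

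The uniform pointwise bound I would establish by Hölder's inequality for Lorentz spaces (property (v)) applied to the factorization $f = f\cdot \chi_B$, with the splitting
\[
\frac{1}{p-\varepsilon}=\frac{1}{p}+\alpha_\varepsilon, \qquad \frac{1}{s}=\frac{1}{s_1}+\frac{1}{s_2},
\]
where $s_1, s_2$ are chosen so that $\|f\|_{L^{p,s_1}_w(B)}$ is controlled by $\|f\|_{L^p_w(B)}$ via the embedding (iii). Concretely, if $s\geq p$ I take $s_1=s$, $s_2=\infty$ and use $L^{p,p}_w \hookrightarrow L^{p,s}_w$; if $s<p$ I take $s_1=p$, $s_2=ps/(p-s)$ so that $\|f\|_{L^{p,p}_w}=\|f\|_{L^p_w}$ exactly. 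In both cases property (ii) gives $\|\chi_B\|_{L^{1/\alpha_\varepsilon, s_2}_w(B)}=w(B)^{\alpha_\varepsilon}$, delivering the required estimate.

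The main obstacle is controlling the Hölder constant uniformly as $\varepsilon\to 0$, because the auxiliary exponent $p_2=1/\alpha_\varepsilon$ tends to $\infty$, a limit that is not captured by the compact-range uniformity of Remark~\ref{mainremark}. I would circumvent this by passing to the rearrangement identity
\[
\| f\|_{L^{p-\varepsilon,s}_w(B)}^s=\frac{s}{p-\varepsilon}\int_0^{w(B)} t^{s/(p-\varepsilon)-1}\bigl(f^{*}_w(t)\bigr)^s\,dt,
\]
writing $t^{s/(p-\varepsilon)-1}=t^{s/p-1}\,t^{s\alpha_\varepsilon}$ and using $t^{s\alpha_\varepsilon}\leq w(B)^{s\alpha_\varepsilon}$ on $(0,w(B))$ to obtain
\[
\| f\|_{L^{p-\varepsilon,s}_w(B)}\leq \Bigl(\frac{p}{p-\varepsilon}\Bigr)^{1/s} w(B)^{\alpha_\varepsilon}\,\| f\|_{L^{p,s}_w(B)},
\]
with explicit and uniformly bounded prefactor for $\varepsilon\in(0,p-1)$. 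Combined with the Hölder step above to pass from $\|f\|_{L^{p,s}_w(B)}$ to $\|f\|_{L^p_w(B)}$, this yields the uniform constant. The concluding step is then the simple manipulation of suprema described in the first paragraph.
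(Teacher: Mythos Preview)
Your approach is essentially the same as the paper's: split $\frac{1}{p-\varepsilon}=\frac{1}{p}+\alpha_\varepsilon$, apply H\"older's inequality for Lorentz spaces (property~(v)) to $f=f\cdot\chi_B$, evaluate $\|\chi_B\|$ via property~(ii), pass from $L^{p,s_1}_w$ to $L^p_w$ via the embedding~(iii), and then take the supremum in $\varepsilon$. The one difference is that the paper simply writes the H\"older constant as a uniform $C$ without comment, whereas you correctly flag that the auxiliary exponent $p_2=1/\alpha_\varepsilon\to\infty$ falls outside the compact-range uniformity of Remark~\ref{mainremark}; your direct rearrangement computation $t^{s/(p-\varepsilon)-1}=t^{s/p-1}t^{s\alpha_\varepsilon}\le w(B)^{s\alpha_\varepsilon}t^{s/p-1}$ is a clean and legitimate way to obtain the bound with an explicit constant $(p/(p-\varepsilon))^{1/s}\le p^{1/s}$, and in fact makes the argument more self-contained than the paper's version.
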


\begin{proof} By using properties (ii) and  (v) of the Lorentz spaces (see Section \ref{Prelim}) with respect to the exponents:
$$  \frac{1}{p-\varepsilon} = \frac{1}{p}+ \frac{\varepsilon}{ p(p-\varepsilon) };  \;\;\;  \frac{1}{s} = \frac{1}{s_1}+ \frac{1}{s_2},  $$
where $\varepsilon \in (0, p-1]$, $p< s_1$, we have

\begin{align*}
& \| f\|_{ L^{p), s, \theta}_w (B)}  = \sup_{0< \varepsilon\leq  p-1} \varepsilon^{\frac{\theta}{p-\varepsilon}} \|f \|_{L^{p-\varepsilon, s}_w}
 \leq C  \sup_{0< \varepsilon\leq  p-1} \varepsilon^{\frac{\theta}{p-\varepsilon}} \|f \|_{L^{p, s_1}_w(B)}
\|\chi_B \|_{ L^{p(p-\varepsilon)/\varepsilon, s_2}_w }
  \\ &  \leq C \|f \|_{ L^{p, s_1}_w }  \sup_{0< \varepsilon< p-1} \varepsilon^{\frac{\theta}{p-\varepsilon}}
w(B)^{ \frac{\varepsilon}{p(p-\varepsilon)} } \leq  C \|f \|_{L^{p}_w} w(B)^{-1/p}    \| \chi_B \|_{L^{p),  \theta}_w}.
\end{align*}

\end{proof}

\begin{theorem}\label{Theorem 7} Let $w$ be an integrable weight on $X$  and let  $1< p,s<\infty$. Suppose that  $\theta>0$. Then $M$ is bounded in $L^{p), s, \theta}_w$ if and only if    $w\in A_p$.
\end{theorem}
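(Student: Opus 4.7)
The proof naturally splits into two independent directions.

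\textbf{Sufficiency.} Assuming $w\in A_p(X)$, I would apply Theorem~\ref{Theorem 5} (the diagonal extrapolation in grand Lorentz spaces) to the family $\mathcal{F}=\{(|f|,Mf):f\in\mathcal{D}(X)\}$. The hypothesis of that theorem---the one--weight inequality for some $p_0\in(1,\infty)$ and all $w_0\in A_{p_0}(X)$ on $L^{p_0}_{w_0}$ with a non--decreasing dependence on $[w_0]_{A_{p_0}}$---is exactly the Muckenhoupt--Buckley bound already recorded in \eqref{HuPeRe}. The conclusion is $\|Mf\|_{L^{p),s,\theta}_w}\le C\|f\|_{L^{p),s,\theta}_w}$ for $f\in\mathcal{D}(X)$, and a routine truncation plus Fatou argument extends it to arbitrary $f\in L^{p),s,\theta}_w$.

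\textbf{Necessity.} Assuming $M$ is bounded on $L^{p),s,\theta}_w$ with constant $C$, I would run the standard Muckenhoupt test with a regularized test function: fix a ball $B\subset X$, set $w_N:=\max(w,1/N)$ and put $f_N:=w_N^{1-p'}\chi_B$, which is bounded (hence in $L^{p),s,\theta}_w$ since $X$ is bounded and $w$ integrable). The trivial pointwise estimate $Mf_N(x)\ge (\mu(B))^{-1}\int_B w_N^{1-p'}\,d\mu$ for $x\in B$, combined with the assumed boundedness, yields
$$\Bigl(\frac{1}{\mu(B)}\int_B w_N^{1-p'}\,d\mu\Bigr)\,\|\chi_B\|_{L^{p),s,\theta}_w}\le C\,\|f_N\|_{L^{p),s,\theta}_w}.$$
One then evaluates the two grand norms: using property (ii) of Section~\ref{Prelim} one has $\|\chi_B\|_{L^{p-\varepsilon,s}_w}=w(B)^{1/(p-\varepsilon)}$, and via the Lorentz--Lebesgue equivalence $\|f_N\|_{L^{p-\varepsilon,s}_w}$ is comparable (with constants uniform in $\varepsilon$) to $(\int_B w_N^{-(p'-1)(1-\varepsilon)}\,d\mu)^{1/(p-\varepsilon)}$. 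Matching the extremizing $\varepsilon$ in the two resulting suprema and sending $N\to\infty$ by monotone convergence, a rearrangement produces
$$\Bigl(\frac{w(B)}{\mu(B)}\Bigr)\Bigl(\frac{1}{\mu(B)}\int_B w^{1-p'}\,d\mu\Bigr)^{p-1}\le C',$$
uniformly in $B$; this is exactly $w\in A_p(X)$.

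\textbf{Main obstacle.} The principal technical difficulty lies in the necessity step: each side of the displayed test inequality is itself a supremum over $\varepsilon\in(0,p-1)$, and the extremizing $\varepsilon$ in $\|\chi_B\|_{L^{p),s,\theta}_w}$ need not coincide with that in $\|f_N\|_{L^{p),s,\theta}_w}$, so one cannot simply divide out the common factor $\varepsilon^{\theta/(p-\varepsilon)}$. To reconcile this I would exploit that $X$ is bounded and $w$ is integrable: both $w(B)$ and $\int_B w^{1-p'}\,d\mu$ remain in a controlled range, and the embedding $L^{p,s}_w\hookrightarrow L^{p),s,\theta}_w$ (stemming from $w(X)<\infty$) allows the two suprema to be estimated simultaneously at a common small value of $\varepsilon$, from which the $A_p$ characteristic is extracted in the limit $\varepsilon\to 0^+$.
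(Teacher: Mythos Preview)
Your sufficiency argument matches the paper's exactly: apply Theorem~\ref{Theorem 5} with the Buckley bound \eqref{HuPeRe} as the $L^{p_0}_{w_0}$ input.

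Your necessity argument, however, has a genuine gap. You correctly identify the obstacle---the extremizing $\varepsilon$ on the two sides need not agree---but your proposed resolution does not work. Evaluating both suprema ``at a common small value of $\varepsilon$'' and sending $\varepsilon\to 0^+$ kills the factor $\varepsilon^{\theta/(p-\varepsilon)}$ on both sides, so nothing survives in the limit; and the embedding $L^{p,s}_w\hookrightarrow L^{p),s,\theta}_w$ by itself only gives an upper bound on $\|f_N\|_{L^{p),s,\theta}_w}$ without the crucial factor of $\|\chi_B\|$ needed for cancellation. The paper bypasses the supremum-matching problem entirely by means of Lemma~\ref{Mn}, which is a H\"older-type factorization
\[
\|f\|_{L^{p),s,\theta}_w(B)}\le C\,w(B)^{-1/p}\,\|f\|_{L^p_w(B)}\,\|\chi_B\|_{L^{p),\theta}_w}.
\]
Since $\|\chi_B\|_{L^{p-\varepsilon,s}_w}=w(B)^{1/(p-\varepsilon)}$ is independent of $s$, one has $\|\chi_B\|_{L^{p),s,\theta}_w}=\|\chi_B\|_{L^{p),\theta}_w}$, and this common factor now cancels cleanly from both sides of your test inequality, leaving
\[
\frac{1}{\mu(B)}\int_B |f|\,d\mu \le C\,w(B)^{-1/p}\|f\|_{L^p_w(B)}
\]
for all $f\in L^p_w(B)$. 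Choosing $f=\chi_B w^{1-p'}$ (or your truncated $f_N$ and passing to the limit) then yields $w\in A_p$ directly. The missing ingredient in your outline is precisely this lemma; once you have it, the rest of your test-function strategy goes through.
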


\begin{proof}
{\em Sufficiency} follows from Theorem \ref{Theorem 5}. That is why we show only {\em Necessity.}
Suppose that $M$ is bounded in $L^{p), s, \theta}$. Take a ball $B$ and non-negative $f\in L^p_w(B)$. By Lemma \ref{Mn}  we have that
$$\| \chi_B f\|_{L^{p), s, \theta}} \leq   C w(B)^{-1/p} \| \chi_B  f\|_{ L^p_w } \| \chi_B \|_{L^{p) \theta}}   $$
with a positive constant $C$ independent of $B$ and $f$. Since the pointwice inequality
$$  \frac{1}{\mu(B)} \int\limits_B |f(y)| d\mu \leq M(f \chi_B)(x) $$
holds for $x\in B$, then we have that

\begin{align*}
&\| Mf\|_{ L^{p), s, \theta}_w(B)} \geq  \frac{1}{\mu(B)} \int\limits_B \| \chi_B \|_{ L^{p), s, \theta}_w(B) } \\  & \geq
\| \chi_B \|_{ L^{p), s, \theta}_w(B)}
\bigg( \frac{1}{\mu(B)}\int\limits_B |f(y)|d\mu(y) \bigg)   \\  & = \| \chi_B \|_{ L^{p), \theta}_w(B)}
\bigg( \frac{1}{\mu(B)}\int\limits_B |f(y)|d\mu(y) \bigg).
\end{align*}
Consequently, taking the boundedness of $M$ into account we find that

$$ \bigg( \frac{1}{\mu(B)} \int\limits_B  |f(y)| d\mu(y) \bigg) \| \chi_B  \|_{  L^{p), \theta}_w(B) } \leq C  w(B)^{-1/p}\|f \|_{ L^p_w(B)}
\| \chi_B \|_{ L^{p),  \theta}_w }.$$
Now choosing  $f= \chi_B w^{1-p'}$ we conclude that $w\in A_p(X)$.
\end{proof}

\begin{theorem}\label{Theorem 8} Let  $w$ be an integrable weight on $X$ and let  $1< p<\infty$. Suppose that  $\theta>0$ and  $w\in A_p$.
Then there is a positive constant $C$ such that for all  $f\in {\mathcal{D}}(X)$,  the inequality

$$ \| Kf \|_{ L^{p),s, \theta}_w } \leq C \| f \|_{ L^{p),s, \theta}_w } $$
holds. Conversely, if  $H$ be the Hilbert transform on $I:= (0,1)$:

$$ Hf(x)= (p.v) \int\limits_0^1 \frac{f(t)}{x-t} dt,$$
then from the boundedness of $H$ in $L^{p), s, \theta}_w(I)$ it follows that $ w\in A_p(I)$.
\end{theorem}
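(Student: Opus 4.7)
The \emph{sufficiency} reduces to a direct application of Theorem~\ref{Theorem 5}. I take the family $\mathcal{F}=\{(|f|,|Kf|):f\in\mathcal{D}(X)\}$. The classical one-weight estimate for Calder\'on--Zygmund operators, recalled at the beginning of this subsection, asserts that for every $p_0\in(1,\infty)$ and every $w\in A_{p_0}(X)$,
$$\|Kf\|_{L^{p_0}_w(X)}\le C_0\bigl([w]_{A_{p_0}(X)}\bigr)\|f\|_{L^{p_0}_w(X)},\qquad f\in\mathcal{D}(X),$$
with $C_0$ non-decreasing. This is precisely hypothesis \eqref{Rub+++} for the family $\mathcal{F}$, so Theorem~\ref{Theorem 5} delivers at once $\|Kf\|_{L^{p),s,\theta}_w}\le C\|f\|_{L^{p),s,\theta}_w}$ for all $w\in A_p(X)$.

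For the \emph{necessity} direction for the Hilbert transform $H$, my plan is to adapt the argument used in the necessity part of Theorem~\ref{Theorem 7}, replacing the pointwise bound by $M$ with a corresponding pointwise lower bound for $H$. Fix an interval $B=(a,b)\subset(0,1/2)$ and set $B^{*}=(b,2b-a)\subset I$. Testing on $f=\chi_B w^{1-p'}$, for every $x\in B^{*}$ the elementary estimate
$$Hf(x)=\int_a^b\frac{w^{1-p'}(t)}{x-t}\,dt\ge\frac{1}{2|B|}\int_B w^{1-p'}(t)\,dt=\frac{w^{1-p'}(B)}{2|B|}$$
plays the role of $\frac{1}{\mu(B)}\int_B|f|\,d\mu\le M(\chi_B f)(x)$ in the proof of Theorem~\ref{Theorem 7}. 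Combining this with the assumed boundedness of $H$ on $L^{p),s,\theta}_w(I)$, invoking Lemma~\ref{Mn} to bound $\|\chi_B w^{1-p'}\|_{L^{p),s,\theta}_w}$ from above, and using the identity $\|\chi_E\|_{L^{q,r}_w}=w(E)^{1/q}$ (so that $\|\chi_{B^{*}}\|_{L^{p),s,\theta}_w}=\|\chi_{B^{*}}\|_{L^{p),\theta}_w}$), I would arrive at
$$\frac{w(B)^{1/p}\,w^{1-p'}(B)^{1/p'}}{|B|}\le C\,\frac{\|\chi_B\|_{L^{p),\theta}_w}}{\|\chi_{B^{*}}\|_{L^{p),\theta}_w}}.$$

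The main obstacle is that, unlike in the $M$-argument of Theorem~\ref{Theorem 7}, the ratio $\|\chi_B\|_{L^{p),\theta}_w}/\|\chi_{B^{*}}\|_{L^{p),\theta}_w}$ does not cancel, because the Hilbert transform of a function supported on $B$ lives on the adjacent interval rather than on $B$ itself. I plan to handle this by running the symmetric argument, namely testing on $f=\chi_{B^{*}}w^{1-p'}$ and evaluating $|Hf|$ on $B$, to obtain the inequality with $B$ and $B^{*}$ interchanged. Multiplying the two estimates cancels the ratio of characteristic-function norms and yields the pairwise bound
$$\frac{w(B)^{1/p}\,w^{1-p'}(B)^{1/p'}}{|B|}\cdot\frac{w(B^{*})^{1/p}\,w^{1-p'}(B^{*})^{1/p'}}{|B^{*}|}\le C^2.$$
A standard chaining/splitting argument, applied to the left and right halves of an arbitrary subinterval of $I$ and exploiting that the pairwise estimate holds uniformly for all adjacent equal-length pairs $B,B^{*}\subset I$, then upgrades this to the genuine $A_p(I)$ condition. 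This final combinatorial step, which recovers the classical derivation of $A_p$ from one-dimensional singular-integral boundedness in the grand-Lorentz setting, is the technically most delicate point of the proof.
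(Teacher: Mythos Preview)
Your sufficiency argument is exactly the paper's: apply Theorem~\ref{Theorem 5} to the pairs $(|f|,|Kf|)$, using the known one-weight $L^{p_0}_w$ bound for Calder\'on--Zygmund operators.

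For the necessity part, your approach is correct but differs from the paper's in one instructive respect. The paper does \emph{not} test $H$ directly on $\chi_B w^{1-p'}$. Instead it first establishes the comparability
\[
\|\chi_J\|_{L^{p),\theta}_w(I)} \le C\,\|\chi_{J'}\|_{L^{p),\theta}_w(I)}
\]
for adjacent equal-length intervals $J,J'\subset I$, by testing $H$ on the \emph{unweighted} characteristic function $f=\chi_{J'}$: for $x\in J$ one has $|Hf(x)|\ge 1/2$, so $\|Hf\|_{L^{p),s,\theta}_w}\ge \tfrac12\|\chi_J\|_{L^{p),\theta}_w}$, while $\|f\|_{L^{p),s,\theta}_w}=\|\chi_{J'}\|_{L^{p),\theta}_w}$. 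With this comparability in hand, the ratio $\|\chi_B\|_{L^{p),\theta}_w}/\|\chi_{B^\ast}\|_{L^{p),\theta}_w}$ that you worried about is bounded by a fixed constant, and the argument of Theorem~\ref{Theorem 7} then goes through verbatim on the pair $(J,J')$ to give $w\in A_p$. Your symmetrize-and-multiply workaround also succeeds, and in fact more easily than you suggest: since each factor $w(B)^{1/p}w^{1-p'}(B)^{1/p'}/|B|$ in your product inequality is $\ge 1$ by H\"older's inequality, the product bound immediately forces each factor to be $\le C^2$, so no chaining is needed for intervals admitting an adjacent copy inside $I$. The paper's route is cleaner because it isolates the single structural fact \eqref{****} and then reuses the maximal-function argument wholesale; your route reaches the same conclusion but entangles the two steps.
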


\begin{proof} The first part (sufficiency) of the statement follows immediately from Theorem \ref{Theorem 5}; that is why we prove the second part of the theorem (necessity).

We follow \cite{KoMeGMJ}. First we show that there is a positive constant $C$ such that for all intervals $J,J' \subset I$,  the following inequality holds:

\begin{equation}\label{****}
\|  \chi_J  \|_{   L^{  p), \theta   }_w(I)   } \leq C \|  \chi_{J'}  \|_{   L^{ p), \theta }_w(I)  },
\end{equation}
where $J:= (a,b)$ with $b-a\leq 1/4$, and

$$ J'=
\begin{cases}  (b, 2b-a) \;  \text{if}\;   (b, 2b-a)  \subset I,  \\
(2a-b, a) \; \text{if}\;  (2a-b, a) \subset I \; \text{and} \; (b, 2b-a) \cap   I^c  \neq \emptyset.
\end{cases}
$$

Indeed, without loss of generality suppose that $J'= (b, 2b-a)$. Then for $f= \chi_{J'}$ and $x\in J$,

$$ \| Hf\|_{L^{p), \theta}_w(J)} \geq \frac{1}{2} \| \chi_J \|_{ L^{p), \theta}_w(I) }. $$
On the other hand, observe that
$$ \| f\|_{L^{p), \theta}_w(J)} = \| \chi_{J'} \|_{L^{p), \theta}_w(I)}. $$
Consequently, due to the boundedness of $H$ we have \eqref{****}.

Arguing now as in the proof of Theorem \ref{Theorem 7} for intervals $J$ and $J'$ and by using Lemma \ref{Mn} we get the condition $w\in A_p(I)$.
\end{proof}

\begin{theorem}\label{Theorem 9} Suppose that  $0<\alpha< 1$ and let  $0< p,s< 1/\alpha$. Let $w$ be an integrable weight on $X$, and let $\theta>0$. We set $q= \frac{p}{1-\alpha p}$, $r= \frac{s}{1-\alpha s}$. Then the following statements are equivalent:

\rm{(i)}

There is a positive constant $C$ such that for all $f\in L^{p), s, \theta}_w$,
$$ \|I_{\alpha}(w^{\alpha} f) \|_{L^{q), r, q \theta/p}_w} \leq C  \| f \|_{L^{p), s,  \theta}_w}; $$

\rm{(ii)}

There is a positive constant $C$ such that for all $f\in L^{p), s, \theta}_w$,
$$ \|M_{\alpha}(w^{\alpha} f) \|_{L^{q), r, q \theta/p}_w} \leq C  \| f \|_{L^{p), s,  \theta}_w}; $$

\rm{(iii)}
$w\in A_{1+ p/q'}$.
\end{theorem}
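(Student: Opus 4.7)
I will establish the equivalence via the cycle $(iii) \Rightarrow (i) \Rightarrow (ii) \Rightarrow (iii)$. The easiest implication is $(i) \Rightarrow (ii)$: by the pointwise estimate \eqref{pointwise++}, $M_\alpha h(x) \leq C_\alpha I_\alpha h(x)$ for $h \geq 0$, applied to $h = w^\alpha f$, combined with the monotonicity property of Lorentz norms (inherited by the grand Lorentz norm through the defining supremum in $\varepsilon$), the boundedness of $I_\alpha(w^\alpha\cdot)$ from $L^{p),s,\theta}_w$ to $L^{q),r,q\theta/p}_w$ implies the same for $M_\alpha(w^\alpha\cdot)$.

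For $(iii) \Rightarrow (i)$ I would invoke Theorem~\ref{Theorem 6}, the off-diagonal grand Lorentz extrapolation, applied to the weight-independent family $\mathcal{F} = \{(h, I_\alpha h) : h \in \mathcal{D}(X)\}$. The one-weight hypothesis required by that theorem is furnished by the classical weighted Lebesgue bound for $I_\alpha$ recalled just before Lemma~\ref{Mn}: for $p_0, q_0$ with $1/p_0 - 1/q_0 = \alpha$ and $w \in A_{1+q_0/p_0'}$, the inequality $\|I_\alpha(h w^\alpha)\|_{L^{q_0}_w} \leq C \|h\|_{L^{p_0}_w}$ holds (which, rewritten via $h\mapsto h w^{-\alpha}$, gives exactly the hypothesis $\|I_\alpha h\|_{L^{q_0}_w}\le C\|h\|_{L^{p_0}_{w^{p_0/q_0}}}$ of Theorem~\ref{Theorem 6}). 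Since the exponents satisfy $1/p - 1/q = 1/s - 1/r = \alpha$, and $w \in A_{1+p/q'}$ by assumption, Theorem~\ref{Theorem 6} yields the grand Lorentz bound, and (i) follows after substituting $h = w^\alpha f$.

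For the necessity $(ii) \Rightarrow (iii)$ I would follow the strategy of Theorem~\ref{Theorem 7}. Fix a ball $B$ and a non-negative $\varphi$ supported in $B$; since $M_\alpha(w^\alpha \varphi)(x) \geq \mu(B)^{\alpha-1} \int_B \varphi\, w^\alpha \,d\mu$ for $x \in B$, combining this with (ii) gives
\[
\mu(B)^{\alpha-1}\Bigl(\int_B \varphi\, w^\alpha \,d\mu\Bigr)\|\chi_B\|_{L^{q),r,q\theta/p}_w(B)} \leq C \|\varphi\|_{L^{p),s,\theta}_w(B)}.
\]
Applying Lemma~\ref{Mn} to the right-hand side, together with a direct computation of $\|\chi_B\|_{L^{q),r,q\theta/p}_w(B)}=\sup_{\varepsilon}\varepsilon^{q\theta/(p(q-\varepsilon))}w(B)^{1/(q-\varepsilon)}$ via its defining supremum, the common grand-Lorentz factors cancel after a careful comparison, leaving an averaged weighted inequality on $B$; testing with $\varphi = w^{-q'/p - \alpha}\chi_B$ assembles the resulting integrals into the $A_{1+p/q'}$ quotient $\bigl(\tfrac{1}{\mu(B)}\int_B w\,d\mu\bigr)\bigl(\tfrac{1}{\mu(B)}\int_B w^{-q'/p}\,d\mu\bigr)^{p/q'}$. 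The most delicate step of the whole argument is precisely this necessity: one must track the exact asymptotic behaviour in $w(B)$ of both $\|\chi_B\|_{L^{p),\theta}_w(B)}$ and $\|\chi_B\|_{L^{q),q\theta/p}_w(B)}$ so that the grand Lorentz factors cancel cleanly, and one must select the test exponent so that the leftover integrals assemble exactly into the Muckenhoupt $A_{1+p/q'}$ quantity rather than into a weaker or merely related expression.
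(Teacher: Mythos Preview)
Your cycle $(iii)\Rightarrow(i)\Rightarrow(ii)\Rightarrow(iii)$ and your handling of the first two implications match the paper exactly: Theorem~\ref{Theorem 6} for sufficiency, and the pointwise domination \eqref{pointwise++} for $(i)\Rightarrow(ii)$.

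For $(ii)\Rightarrow(iii)$ your outline is also the paper's (test on a ball, lower-bound $M_\alpha$ by the average, apply Lemma~\ref{Mn} on the right), but you stop at the genuinely nontrivial point. The two grand factors $\|\chi_B\|_{L^{q),\,q\theta/p}_w}$ and $\|\chi_B\|_{L^{p),\theta}_w}$ do \emph{not} cancel by a direct comparison of their defining suprema: the exponents of $w(B)$ differ, and the suprema are in general attained at unrelated values of the small parameter. The paper's mechanism is to (a) rewrite the target norm via $\psi(\varepsilon)=\varphi(\varepsilon^\theta)$ with $\varphi(t)=\big[\tfrac{t-q}{1-\alpha(t-q)}+p\big]^{1-(t-q)\alpha}$, so that $\psi(\varepsilon)^{1/(q-\varepsilon)}\approx \varepsilon^{q\theta/(p(q-\varepsilon))}$; (b) let $\eta_B$ be a point where the $L^{p),\theta}_w$-supremum is essentially attained; and (c) choose $\varepsilon_B$ \emph{coupled to} $\eta_B$ by the off-diagonal constraint
\[
\frac{1}{p-\eta_B}-\frac{1}{q-\varepsilon_B}=\alpha.
\]
With this coupling one checks $\tfrac{1}{p}-\tfrac{1}{p-\eta_B}+\tfrac{1}{q-\varepsilon_B}=\tfrac{1}{q}$ and $\eta_B^{-\theta/(p-\eta_B)}\psi(\varepsilon_B)^{1/(q-\varepsilon_B)}\approx 1$, which is exactly what forces the grand factors to disappear and leaves $\mu(B)^{\alpha-1}w(B)^{1/q}\big(\int_B w^{-p'/q}\big)^{1/p'}\le C$. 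Your phrase ``cancel after a careful comparison'' does not yet contain this idea; without the coupled choice of $(\eta_B,\varepsilon_B)$ the argument does not close. Note also that the paper's test function is $f=\chi_B\,w^{-\alpha-p'/q}$ (so that $w^\alpha f=\chi_B w^{-p'/q}$), which is the exponent that produces the $A_{1+q/p'}$ quotient obtained at the end of the proof; your exponent $-q'/p-\alpha$ targets the other index and should be adjusted accordingly.
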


\begin{proof}
First we will show that (iii) $\Rightarrow$ (i).  Let $w\in A_{1+ p/q'}$. Since
$$\frac{1}{p}- \frac{1}{q}= \frac{1}{s}- \frac{1}{r} = \alpha,$$
due to  Theorem \ref{Theorem 6} we get
$$ \| I_{\alpha} f \|_{L^{q),r, q \theta/p}_w(X)} \leq C \| w^{\frac{1}{q}-\frac{1}{p}} f\|_{L^{p),s, \theta}_{w}(X)}=
C\| w^{-\alpha} f\|_{L^{p),s, \theta}_{w}(X)} $$
provided that the right-hand side norm is finite.

The latter inequality is equivalent to
$$ \| I_{\alpha} (w^{\alpha} f) \|_{L^{q),r, q \theta/p}_w(X)} \leq C \| f\|_{L^{p),s, \theta}_{w}(X)}.  $$


Since (i) $\Rightarrow$ (ii)  by the pointwise inequality \eqref{pointwise++}, it suffices to show that (ii)  $\Rightarrow$ (iii). We follow the arguments of the proof of Theorem 3.1 from \cite{Mes}.

Observe that (ii) is equivalent to the inequality

\begin{equation}\label{**}
\|M_{\alpha}(fw^{\alpha})\|_{ L_{w}^{q),r, \psi(x)}(X)}\leq  c \|f\|_{L_{w}^{p),s, \theta}(X)},
\end{equation}
where

$$\psi(t):=\varphi(t^{\theta}), \;\;\;\varphi(t):=\left[\frac{t-q}{1-\alpha(t-q)}+p\right]^{1-(t-q)\alpha}.
$$

This follows from the fact that $\varphi(t)\approx t^{q/p}$ as $t\to 0$.

 Let (ii) (i.e., equivalently  \eqref{**})  holds.  Let us take a ball $B \subset X$ and
$f=\chi_{B}w^{-\alpha-p'/q}$. Then for $x\in B$, we get that
$$
M_{\alpha}(w^{\alpha}f)(x)\geq\frac{1}{\mu(B)^{1-\alpha}}\int\limits_{B}w^{\alpha}f d\mu =\frac{1}{\mu(B)^{1-\alpha}}
\int\limits_{B}w^{-p'/q} d\mu.
$$
Hence,

$$ \|M_{\alpha}(w^{\alpha}f)\|_{{L_{w}^{q),r, \psi(x)}(X)}}\geq
\mu(B)^{\alpha-1}\left(\int\limits_{B} w^{-p'/q}d\mu \right)\|\chi_{B}\|
_{L_{w}^{q),s, \psi(x)}(X)}.
$$
Further, by Lemma \ref{Mn}  we find that
\begin{align*}
& \mu(B)^{\alpha-1} \left( \int\limits_{X} w^{-p'/q}d\mu \right) \| \chi_{B} \|_{ L_{ w}^{q),r, \psi(x) }(B) }
\\ &
= \mu(B)^{\alpha-1} \left( \int\limits_{X} w^{-p'/q}d\mu\right) \| \chi_{B} \|_{ L_{ w}^{q),  \psi(x) }(B) }
\\ &
\leq c \|f\|_{L^{p),s, \theta}_w(X)} \leq c(w(B))^{-\frac{1}{p}}
\left(\int\limits_{B}|f(y)|^{p}w(y)d\mu(y) \right)^{\frac{1}{p}}
\|\chi_{B}\| _{L_{w}^{p),  \theta}(X)}
\\ &
= c w(B)^{-\frac{1}{p}} \left(\int\limits_{B}w^{-p'/q}\right)^{1/p}
\|\chi_{B}\| _{L_{w}^{p),  \theta}(X)}.
\end{align*}

It is easy to see that there is a number $\eta_{J}$
depending on $J$ such that $0<\eta_{J}\leq p-1$ and
$$
\mu(B)^{\alpha-1}w(B)^{\frac{1}{p}}\left(\int\limits_{B}w^{-p'/q}d\mu\right)^{\frac{1}{p'}}
\|\chi_{B}\| _{L_{w}^{q),\psi(x)}(X)} \leq
c\left(\eta_{B}w(B)\right)^{\frac{1}{p-\eta_{J}}}.
$$
For such  an $\eta_{B}$ we choose $\varepsilon_{B}$ so that
$$
\frac{1}{p-\eta_{B}}-\frac{1}{q-\varepsilon_{B}}=\alpha.
$$
Then $0<\varepsilon_{B}\leq q-1$ and
$$
\mu(B)^{\alpha-1}w(B)^{\frac{1}{p}-\frac{1}{p-\eta_{B}}}\eta_{B}^{-\frac{\theta}{p-\eta_{B}}}
\psi(\varepsilon_{B})^{\frac{1}{q-\varepsilon_{B}}}w(B)^{\frac{1}{q-\varepsilon_{B}}}
\left(\int\limits_{B}w^{-p'/q}d\mu\right)^{\frac{1}{p'}}\leq C. $$


Observe that since $\psi(t)\approx t^{ \theta (1+ \alpha q)}$ for small positive $t$, we have that

\begin{align*}
& \eta_{B}^{-\frac{\theta}{p-\eta_{B}}}\psi(\varepsilon_{B})^{\frac{1}{q-\varepsilon_{B}}}
 = \eta_{B}^{-\frac{\theta}{p-\eta_{B}}}\varphi\left(\varepsilon_{B}^{\theta}\right)^{\frac{1}{q-\varepsilon_{B}}}
\approx\eta_{B}^{-\frac{\theta}{p-\eta_{B}}}\varepsilon_{B}^{\frac{\theta(1+\alpha
q)}{q-\varepsilon_{B}}}\\ & =\left(\eta_{B}^{-\frac{1}{p-\eta_{B}}}\varepsilon_{B}^{\frac{1+\alpha
q}{q-\varepsilon_{B}}}\right)^{\theta}
\approx\left(\eta_{B}^{-\frac{1}{p-\eta_{B}}}\varphi(\varepsilon_{B})
^{\frac{1}{q-\varepsilon_{B}}}\right)^{\theta}=1
\end{align*}
and also,
$$
\frac{1}{p}-\frac{1}{p-\eta_{B}}+\frac{1}{q-\varepsilon_{B}}=\frac{1}{p}-\alpha=\frac{1}{q}.
$$
Finally, we have that
$$
\mu(B)^{\alpha-1} w(B)^{\frac{1}{q}}\left(\int\limits_{B}w^{-p'/q}\right)^{1/p'}\leq
C.
$$
The theorem has been proved.

\end{proof}

\subsection{Commutators}

We say that a function $b$ defined on $X$ belongs to $BMO$ if
$$ \|b\|_{BMO}= \sup_{B}\frac{1}{\mu(B)}\int\limits_B |b(x)- b_B|d\mu(x)  < \infty,$$
where $b_B =\frac{1}{\mu(B)} \int\limits_B b(y) d\mu(y)$.
\vskip+0.2cm

Let $b\in BMO(X)$, $m\in {\mathbb{N}}\cup \{0\}$ and let

$$ K_b^m f(x) = \int\limits_X [b(x)- b(y)]^{m} k(x,y) f(y) d\mu(y),   $$
where $k$ is the Calder\'on-Zygmund kernel.

It is known (see  \cite{PrSa}) that if $1<r<\infty$ and $w\in A_{\infty}$, then the one-weight inequality
$$\| K_b^m f\|_{L^r_w(X)} \leq C \|b \|_{BMO(X)}^{m} \| M^{m+1} f\|_{L^r_w(X)}, \;\; f\in {\mathcal{D}}(X),$$
holds, where $M^{m+1}$ is the the Hardy--Littlewood maximal operator iterated $m+1$ times.

\vskip+0.2cm

Based on extrapolation result in grand Lebesgue spaces we have

\begin{theorem}\label{Theorem 10}  Let $X$ be bounded and let $1<p,s<\infty$,  $\theta>0$. Then there is a positive constant $C$ such that for all $f\in {\mathcal{D}}(X)$ and all $w\in A_{p}(X)$,

$$\| K_b^m f\|_{L^{p), s, \theta}_w(X)} \leq C  \| M^{m+1} f\|_{L^{p),s,  \theta}_w(X)}, \;\; f\in {\mathcal{D}}(X).$$
\end{theorem}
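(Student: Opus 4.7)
The plan is to reduce Theorem \ref{Theorem 10} to the grand-Lorentz extrapolation result Theorem \ref{Theorem 5} by supplying as seed the unweighted-exponent one-weight inequality already stated in the text (i.e.\ the $L^r_w$ estimate from \cite{PrSa}).

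First, fix any convenient exponent $p_0 \in (1,\infty)$ (for instance $p_0=2$). The recalled estimate from \cite{PrSa} gives, for every $w\in A_{\infty}(X)$ and every $f\in \mathcal{D}(X)$,
$$
\|K_b^m f\|_{L^{p_0}_w(X)} \;\leq\; C\,\|b\|_{BMO(X)}^m\,\|M^{m+1}f\|_{L^{p_0}_w(X)},
$$
where the constant depends on $[w]_{A_{\infty}}$ in a non-decreasing way. Since $A_{p_0}(X)\subset A_{\infty}(X)$ and $[w]_{A_{\infty}}\le \overline{C}_{\kappa,\mu}[w]_{A_{p_0}}$ (as observed in Section \ref{Prelim}), this can be rewritten as
$$
\|K_b^m f\|_{L^{p_0}_w(X)} \;\leq\; C\,\|b\|_{BMO}^m\,N\big([w]_{A_{p_0}(X)}\big)\,\|M^{m+1}f\|_{L^{p_0}_w(X)}
$$
for every $w\in A_{p_0}(X)$, with a non-decreasing function $N$. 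This is precisely the seed inequality \eqref{Rub+++} applied to the family
$$
\mathcal{F}\;=\;\Big\{\big(M^{m+1}f,\;K_b^m f\big)\,:\,f\in \mathcal{D}(X)\Big\},
$$
with the first entry of a pair playing the role of $f$ and the second that of $g$ in Theorem \ref{Theorem 5}.

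Now I invoke Theorem \ref{Theorem 5} with this family, the chosen $p_0$, and the prescribed target parameters $1<p<\infty$, $1\le s<\infty$, $\theta>0$, $w\in A_p(X)$. The conclusion of that theorem yields directly
$$
\|K_b^m f\|_{L^{p),s,\theta}_w(X)} \;\leq\; C\,\|M^{m+1}f\|_{L^{p),s,\theta}_w(X)},\qquad f\in \mathcal{D}(X),
$$
which is exactly the assertion of Theorem \ref{Theorem 10}, with $C$ absorbing the factor $\|b\|_{BMO}^m$ and the structural constants produced by extrapolation.

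The only point that deserves verification is the implicit finiteness hypothesis in Theorem \ref{Theorem 5} (namely that $\|g\|_E<\infty$ for the $g$'s to which the extrapolation is applied). Since $X$ is bounded, $w$ is integrable on $X$, and $f\in \mathcal{D}(X)$ is bounded with compact support, Lemma \ref{Mn} (together with the trivial $L^\infty$ bound $\|K_b^m f\|_{L^\infty}<\infty$, which follows from boundedness of the kernel off the diagonal and compactness) ensures $K_b^m f\in L^{p),s,\theta}_w(X)$, so the extrapolation theorem applies without obstruction. The main step is therefore simply identifying the correct seed and family; once this is done, Theorem \ref{Theorem 5} does all the work.
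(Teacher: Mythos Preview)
Your approach is correct and is precisely the one the paper indicates: the paper does not give a detailed proof but simply states that Theorem \ref{Theorem 10} follows from the extrapolation result (Theorem \ref{Theorem 5}) applied to the seed inequality from \cite{PrSa}, which is exactly what you carry out.

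One small correction to your finiteness check: the claim that $\|K_b^m f\|_{L^\infty}<\infty$ is not justified, since Calder\'on--Zygmund operators need not map $L^\infty$ to $L^\infty$ (the kernel is singular on the diagonal, not bounded). The finiteness of $\|K_b^m f\|_{L^{p-\varepsilon,s}_w}$ for small $\varepsilon$ follows instead directly from the seed inequality itself: for $f\in\mathcal{D}(X)$ one has $M^{m+1}f\in L^\infty(X)\subset L^{p-\varepsilon}_w(X)$ (as $X$ is bounded and $w$ integrable), hence $K_b^m f\in L^{p-\varepsilon}_w(X)$ by the \cite{PrSa} estimate with $r=p-\varepsilon$ and $w\in A_{p-\varepsilon}$, and then $K_b^m f\in L^{p-\varepsilon,s}_w(X)$ by property (iii) of Lorentz spaces.
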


\vskip+0.2cm



Further, for $b\in BMO(X)$, let

$$I^m_{\alpha, b}f(x) = \int\limits_X [b(x)- b(y)]^m K_{\alpha}(x,y) d\mu(y), \;\; 0<\alpha<1, $$

$${\mathcal{I}}^m_{\alpha, b}f(x) = \int\limits_X |b(x)- b(y)|^m K_{\alpha}(x,y) d\mu(y), \;\; 0<\alpha<1. $$

It is easy to see that, for $f\geq 0$, $|I^m_{\alpha, b}f(x)| \leq {\mathcal{I}}^m_{\alpha, b}f(x)$. In the same paper \cite{Bern} the authors showed that if $1<p< \infty$, $0<\alpha<1$,  $m\in {\mathbb{N}}\cup \{0\}$, $w\in A_{\infty}(X)$, $b\in BMO(X)$, then there is a constant $C\equiv C_{\alpha, m, p, \kappa, \mu}$ such that

$$ \int\limits_X |{\mathcal{I}}^m_{\alpha, b}f(x)|^p w(x) d\mu(x) \leq C N([w]_{A_{\infty}}) \| b\|_{BMO(X)}^{mp}\int\limits_X [ M_{\alpha} ( M^m f)(x) ]^p w(x) d\mu(x) $$
for some non-decreasing function $N$.

Based on this result and appropriate  extrapolation theorem we have the following statement:

\begin{theorem}\label{Theorem 11}  Let $1<p, s<\infty$, $m\in {\mathbb{N}}\cup \{0\}$  and let $\theta>0$. Suppose that  $X$ is bounded and that    $w\in A_{p}(X)$. Then there is a positive constant $C$ such that

$$\| {\mathcal{I}}^m_{\alpha, b} f\|_{L^{p), s, \theta}_w(X)} \leq C   \|M_{\alpha} ( M^m f)\|_{L^{p),s,  \theta}_w(X)}, \;\;\; f\in {\mathcal{D}}(X). $$
\end{theorem}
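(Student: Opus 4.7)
The strategy is to apply the diagonal extrapolation theorem in grand Lorentz spaces (Theorem \ref{Theorem 5}) to a suitable family of pairs built from the commutator bound recalled just before the statement. Concretely, I propose to let
$$ {\mathcal{F}} = \Big\{ \big( M_{\alpha}(M^m f),\; {\mathcal{I}}^m_{\alpha,b} f \big) : f \in {\mathcal{D}}(X) \Big\}, $$
and verify that a one-weight $L^{p_0}_w$ inequality holds for this family, uniformly over $w$ in an appropriate Muckenhoupt class, so that Theorem \ref{Theorem 5} can be invoked.

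First I would fix some $p_0 \in (1,\infty)$ (e.g., $p_0=2$). The one-weight inequality in $L^{p_0}_w(X)$ quoted from \cite{Bern} in the text yields, for every $w \in A_{\infty}(X)$,
$$ \bigg( \int_X |{\mathcal{I}}^m_{\alpha,b}f|^{p_0} w\, d\mu \bigg)^{1/p_0} \le C\, N_0([w]_{A_{\infty}})\, \|b\|_{BMO(X)}^{m} \bigg( \int_X [M_{\alpha}(M^m f)]^{p_0} w\, d\mu \bigg)^{1/p_0}, $$
where $N_0$ is non-decreasing. Since $b$ is fixed, $\|b\|_{BMO(X)}^m$ is a harmless constant absorbed into $C$. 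To bring this into the form required by Theorem \ref{Theorem 5}, I would use the containment $A_{p_0}(X) \subset A_{\infty}(X)$ together with the standard comparison $[w]_{A_{\infty}} \le C_{\kappa,\mu}[w]_{A_{p_0}}$ recalled in Section \ref{Prelim}. Composing with $N_0$ produces a non-decreasing function $N$ of $[w]_{A_{p_0}(X)}$ such that \eqref{Rub+++} holds (with $f$ replaced by $M_{\alpha}(M^m f)$ and $g$ replaced by ${\mathcal{I}}^m_{\alpha,b}f$) for every $w \in A_{p_0}(X)$ and every pair in ${\mathcal{F}}$.

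With the hypothesis of Theorem \ref{Theorem 5} verified, I then invoke it directly with the given parameters $p,s,\theta$ and with $w \in A_p(X)$. The conclusion of that theorem immediately yields
$$ \| {\mathcal{I}}^m_{\alpha,b} f \|_{L^{p),s,\theta}_w(X)} \le C\, \| M_{\alpha}(M^m f) \|_{L^{p),s,\theta}_w(X)} $$
for every $f \in {\mathcal{D}}(X)$, with the constant $C$ (absorbing $\|b\|_{BMO(X)}^m$ and the structural constants from Theorem \ref{Theorem 5}) independent of $f$. The main technical point — and the only step that is not a routine invocation — is the passage from the $A_{\infty}$ hypothesis of the quoted commutator bound to the $A_{p_0}$ hypothesis needed for Theorem \ref{Theorem 5}, which is taken care of by the comparison of characteristics noted above.
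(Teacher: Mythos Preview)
Your proposal is correct and matches the paper's intended argument: the paper does not spell out a proof but simply says the result follows ``based on this result and appropriate extrapolation theorem,'' meaning exactly the application of Theorem~\ref{Theorem 5} to the family of pairs $\big(M_{\alpha}(M^m f),\,{\mathcal{I}}^m_{\alpha,b}f\big)$ using the $A_\infty$ bound from \cite{Bern}. Your handling of the passage from the $A_\infty$ hypothesis to the $A_{p_0}$ hypothesis via $[w]_{A_\infty}\le \overline{C}_{\kappa,\mu}[w]_{A_{p_0}}$ is precisely the step the paper leaves implicit.
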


\begin{corollary} Under the conditions of Theorem \ref{Theorem 11} we have that  there is a positive constant $C$  such that for all $f\in {\mathcal{D}}(X)$,
$$ \| {\mathcal{I}}^m_{\alpha, b} f\|_{L^{p), s, \theta}_w(X)} \leq C   \|f\|_{L^{p),s,  \theta}_w(X)}. $$
\end{corollary}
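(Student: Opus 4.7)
The plan is to combine Theorem \ref{Theorem 11} with the $L^{p),s,\theta}_w$-boundedness of the Hardy--Littlewood maximal operator from Theorem \ref{Theorem 7}, using that $X$ is bounded to absorb the fractional maximal operator into an iterate of $M$.

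First, I would invoke Theorem \ref{Theorem 11} to obtain
\begin{equation*}
\| \mathcal{I}^m_{\alpha,b} f \|_{L^{p),s,\theta}_w(X)} \leq C \, \| M_\alpha(M^m f) \|_{L^{p),s,\theta}_w(X)}, \quad f \in \mathcal{D}(X),
\end{equation*}
for $w \in A_p(X)$, which is available by hypothesis. Next, I would exploit boundedness of $X$: for every ball $B \subset X$ containing $x$, one has $\mu(B) \leq \mu(X) < \infty$, so
\begin{equation*}
M_\alpha g(x) = \sup_{B \ni x} \mu(B)^\alpha \cdot \frac{1}{\mu(B)} \int_B |g|\, d\mu \leq \mu(X)^\alpha\, M g(x).
\end{equation*}
Applied to $g = M^m f$, this gives the pointwise inequality $M_\alpha(M^m f)(x) \leq \mu(X)^\alpha M^{m+1} f(x)$, and the lattice property of the Lorentz quasinorm (property (ii) of Section \ref{Prelim}, which passes to the grand Lorentz norm by taking suprema in $\varepsilon$) yields
\begin{equation*}
\| M_\alpha(M^m f) \|_{L^{p),s,\theta}_w(X)} \leq \mu(X)^\alpha\, \| M^{m+1} f \|_{L^{p),s,\theta}_w(X)}.
\end{equation*}

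Finally, I would iterate Theorem \ref{Theorem 7}: since $w \in A_p(X)$, the maximal operator $M$ is bounded on $L^{p),s,\theta}_w(X)$ with some constant $C_0 = C_0(w,p,s,\theta)$. Applying this bound $m+1$ times produces
\begin{equation*}
\| M^{m+1} f \|_{L^{p),s,\theta}_w(X)} \leq C_0^{m+1}\, \| f \|_{L^{p),s,\theta}_w(X)}.
\end{equation*}
Chaining the three inequalities gives the claim with constant $C \cdot \mu(X)^\alpha \cdot C_0^{m+1}$. There is no real obstacle here: the only subtlety is verifying that the lattice property transfers to the grand Lorentz norm, but this is immediate because the grand norm is defined as a supremum over $\varepsilon$ of classical Lorentz norms, each of which is monotone in $|f|$.
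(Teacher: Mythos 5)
The paper states this corollary without proof, so there is no argument to compare against; your proposal supplies the natural one and it is correct. The chain Theorem~\ref{Theorem 11} $\Rightarrow$ pointwise bound $M_\alpha g \le \mu(X)^\alpha M g$ on the bounded set $X$ $\Rightarrow$ $m+1$ iterations of Theorem~\ref{Theorem 7} does the job, with the only prerequisite being that $f\in\mathcal{D}(X)$ together with $w$ integrable on the bounded $X$ guarantees $f\in L^{p),s,\theta}_w(X)$, which makes the iteration of the maximal bound legitimate. One small bookkeeping remark: the monotonicity $\|g\|_{L^{p,s}_w}\le\|f\|_{L^{p,s}_w}$ for $|g|\le|f|$ is not the property labelled (ii) in the Lorentz-space list of Section~\ref{Prelim} (that item is the formula $\|\chi_E\|_{L^{p,s}_w}=(wE)^{1/p}$); you are invoking the lattice property, which for $L^{p,s}_w$ is immediate from the definition via distribution functions (and appears as axiom (ii) in the Banach function space list), and which indeed passes to the grand norm by taking the supremum in $\varepsilon$.
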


\section{Further Remarks}


In this section we do some remarks regarding the results obtained in this paper.

\begin{remark}Let $1< p,s< \infty$. We can define new grand Lorentz space involving ''grandification'' of the second parameter $s$ in Lorentz space:
$f\in L^{p), s), \theta}_w$ if
$$ \| f\|_{L^{p), s), \theta}_w} =   \sup_{0<\varepsilon_1 < p-1,\; 0< \varepsilon_2<s-1 } \varepsilon_1^{\frac{\theta}{p-\varepsilon_1}} \|f\|_{
L^{p-\varepsilon_1, s- \varepsilon_2}_w } < \infty. $$

Analyzing the proofs of the main statements we can conclude that they are valid also for the spaces $L^{p), s), \theta}_w$.
\end{remark}

\begin{remark}

If we define grand Lorentz spaces with respect to the quasi-norms

$$ \| f\|_{ {\mathcal{L}}^{p), s), \theta}_w} =   \sup_{0< \varepsilon < \sigma} \varepsilon^{\frac{\theta}{p-\varepsilon}} \|f\|_{L^{p-\varepsilon, s}_w}$$

$$ \| f\|_{{\mathcal{L}}^{p), s, \theta}_w} =   \sup_{0<\varepsilon_1 < \sigma_1,\; 0< \varepsilon_2<\sigma_2 }
\varepsilon_1^{\frac{\theta}{p-\varepsilon_1}} \|f\|_{ L^{p-\varepsilon_1, s- \varepsilon_2}_w},$$
then the sufficiency part of  Theorems \ref{Theorem 5}--  \ref{Theorem 11} remain true even for unbounded $X$.
\end{remark}

\begin{remark}

Let $\varphi$ be a positive increasing function on $(0,p-1]$ such that $\lim_{x\to 0} \varphi (x) =0$.  Let us define the grand Lorentz space with respect to the quasi-norm:

$$ \| f\|_{ L^{p), s, \varphi}_w} =   \sup_{0< \varepsilon < \sigma} \varphi(\varepsilon)^{p-\varepsilon} \|f\|_{L^{p-\varepsilon, s}_w}.$$

Than again the results of this paper remains valid for such spaces.
\end{remark}

\section*{Acknowledgement}   The work was supported by the Shota Rustaveli National Science Foundation of Georgia (Project No. FR-18-2499).

\vspace{3.5 mm}

\noindent
Addresses:

\vskip+0.2cm

\noindent V.~Kokilashvili: Department of Mathematical Analysis,
A.~Razmadze Mathematical Institute, I.~Javakhishvili Tbilisi State
University,   Tamarashvili Str.~6, Tbilisi 0177, Georgia; and
International Black Sea University, 3~Agmashenebeli Ave., Tbilisi
0131,
Georgia.\\
\noindent E-mail: {\tt vakhtang.kokilashvili@tsu.ge}

\noindent A.~Meskhi: Department of Mathematical Analysis,
A.~Razmadze Mathematical Institute, I.~Javakhishvili Tbilisi State
University,  Tamarashvili Str.~6, Tbilisi 0177, Georgia; and
Department of Mathematics, Faculty of Informatics and Control
Systems,Georgian Technical University, 77, Kostava St.,  Tbilisi, Georgia.\\
\noindent E-mail: {\tt alexander.meskhi@tsu.ge}; {\tt a.meskhi@gtu.ge}

\end{document}